\newtheorem{Def}{Definition}
\newtheorem{Thm}{Theorem}
\newtheorem{Lemma}{Lemma}
\newcommand{\fa}{ \widehat {\pmb 1}_A}
\newcommand{\fb}{ \widehat {\pmb 1}_B}
\newcommand{\zd}{\ensuremath{\mathbb Z_{12}}}
\newcommand{\zc}{\ensuremath{\mathbb Z_{c}}}
   \newcommand{\R}{\ensuremath{\mathbb R}} 
 \newcommand{\Z}{\ensuremath{\mathbb Z}} \newcommand{\C}{\ensuremath{\mathbb C}} 
\newcommand{\dessin}[4]{
\begin{figure}[h]
\centerline{\includegraphics[width =#3]{#1}}
\caption{#2}
\label{#4}
\end{figure}}
\title{The Torii of phases}
\begin{document}
\author{Emmanuel Amiot}\thanks{manu.amiot@free.fr}
\date{5/21/2009, CPGE, Perpignan, France }


\maketitle

KEYWORDS: DFT, scales, triads, music theory, music, torus, torii, phase.\bigskip

\section*{Introduction}

The present paper is concerned with the existence, meaning and use of the {\em phases} of the (complex) Fourier coefficients of pc-sets, viewed as maps from $\zc$ to \C.

Their other component, the {\em magnitude}, has received some attention already, and its meaning is more or less understood. Complex numbers are described geometrically by these two dimensions, which contrariwise to cartesian coordinates do not play permutable roles: magnitude is a length, phase is an angle according to the polar representation
$$
    z = \text{magnitude} \times e^{i\times \text{phase}} = |z|\, e^{i \arg z}
    = r\ e^{i \, \varphi}
$$

\dessin{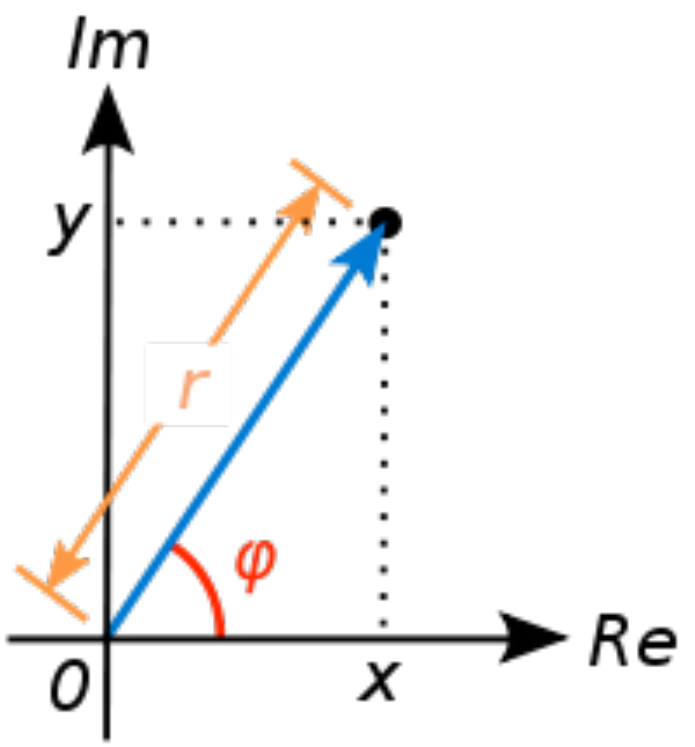}{Argument and phase of a complex number}{5cm}{complexNumber}

The first section recalls briefly the definition and useful features of one species of Discrete Fourier Transform of pc-sets (DFT for short), which is a list of Fourier coefficients, and clarifies the signification of their magnitude and phase, which may bear some relationship to perception and psycho-cognitive issues.

The second section explores a particular cross-section of the most general torus of phases (defined below), representing pc-sets by the phases of coefficients $a_3$ and $a_5$. On this 2D-torus, triads take on a well-known configuration, that of the (dual) Tonnetz which is thus equipped at last with a `natural' metric.
Some other chords or sequences of chords are viewed on this space as examples of its musical relevance. 

The end of the paper changes tack, making use of the model as a convenient universe for drawing gestures -- continuous paths between pc-sets, or even generalizations of those.

\section{From magnitude to phase}
\subsection{Discrete Fourier Transform of pc-sets}

Throughout this paper a pc-set $A\subset \zc$\footnote{$\zc$ is used in definitions for generality but in this paper all examples will be taken with $c=12$.}  is identified with its characteristic map from $\zc$ to $\C$,
$$
   \pmb 1_A : k\mapsto \begin{cases} 1 & \text{ if } k\in A \\ 0 & \text{ else} \end{cases}
$$
For  $A =\{0, 3, 7\}\subset\zd$ one would get for instance $\pmb 1_A(3) = 1$ but $\pmb 1_A(4) = 0$, i.e.
$\pmb 1_A$ takes on the following values when $k$ runs from 0 to 11:
$$
  1 , 0, 0, 1, 0, 0, 0, 1, 0, 0, 0, 0
$$ 
Later on, we may find values other than 0 or 1 (this can be construed for instance as the loudness of a particular pitch(-class) in a chord), thus vindicating the claim that $\pmb 1_A$ is just a map from $\zc$ to \C. The space of these maps, the space of `` quantities of pc's'', will be denoted as $\C^{\zc}$.

The Discrete Fourier Transform or DFT of any map $f:\zc \to \C$ will be defined as another map,
$$
   \widehat f : t \mapsto \sum_{k\in \zc} f(k) \ e^{-2i\pi k t / c}
$$
The value $\widehat f(t)$ is the $t^{th}$ Fourier coefficient of map $f$. Since the set $\C^{\zc}$ of all maps from $\zc$ to $\C$ is isomorphic to $\C^c$, the transformation $f\mapsto \widehat f$ is a linear isomorphism from this space to its image, the Fourier space\footnote{The reciprocal transformation is called inverse Fourier transform with an almost identical formula:
$$
  f(k) = \dfrac1c  \sum_{t\in \zc} f(k) \ e^{+2i\pi k t / c}
$$
 Besides, the Fourier transformation is isometrical for the hermitian metric 
$\| f \|_2 = \sqrt{\sum |f(k)|^2}$ up to a constant, according to Parceval's theorem.}.

In the aforementioned important case of pc-sets, $f = \pmb 1_A$ and the definition reduces to 
$$
   \widehat {\pmb 1}_A ( t ) = \sum_{k\in A} \ e^{-2i\pi k t / c},
$$
a sum of $c^{th}$ roots of unity.

For instance if $A = \{0, 3, 6, 9\}\subset\zd$ (a diminished seventh) one gets
$$
   \widehat {\pmb 1}_A ( t ) = 1 + i^t + (-1)^t + (-i)^t = 
   \begin{cases} 1 & \text{ if 4 divides }t  \\ 0 & \text{ else} \end{cases}
$$
since $e^{-2 i\pi 3 t/12} = e^{- i \pi\, t / 2} = (-i)^t$ and similarly for the other coefficients.
For convenience, we will denote the Fourier coefficients $a_0, a_1, É a_{c-1}$ instead of 
$\widehat {\pmb 1}_A (0), \widehat {\pmb 1}_A (1), É \widehat {\pmb 1}_A (c-1)$.\footnote{ Beware that the $a_k$'s are {\em not} the elements of the pc-set. Besides, the indexes run from 0 to $c-1$ but actually belong to the cyclic group $\zc$. This confusion is harmless and lightens the notations.}

We will need later the

\begin{Lemma} \label{symCoeff}
   For any real-valued map $f$, there is a symmetry between Fourier coefficients:
   $$
       \forall k\in\zc\quad a_{c-k} = \overline{a_k}
   $$
\end{Lemma}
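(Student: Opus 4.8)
The plan is to unwind the definition of the Fourier coefficient and compare it termwise with its complex conjugate. Writing $a_k=\widehat f(k)=\sum_{j\in\zc}f(j)\,e^{-2i\pi jk/c}$, I would first compute $a_{c-k}$ by substituting $c-k$ for $k$ in the exponent. Since $e^{-2i\pi j(c-k)/c}=e^{-2i\pi j}\,e^{2i\pi jk/c}=e^{2i\pi jk/c}$, using that $e^{-2i\pi j}=1$ for every integer $j$, this gives $a_{c-k}=\sum_{j\in\zc}f(j)\,e^{2i\pi jk/c}$.

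Next I would expand $\overline{a_k}$ directly: complex conjugation is additive and multiplicative, hence $\overline{a_k}=\sum_{j\in\zc}\overline{f(j)}\;\overline{e^{-2i\pi jk/c}}=\sum_{j\in\zc}\overline{f(j)}\,e^{2i\pi jk/c}$. This is the single point where the hypothesis enters: $f$ is real-valued, so $\overline{f(j)}=f(j)$ for all $j$, and the last sum coincides with the expression just obtained for $a_{c-k}$. Comparing the two yields $a_{c-k}=\overline{a_k}$, as claimed.

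A final remark would address well-definedness, since indices live in $\zc$: the map $k\mapsto e^{-2i\pi jk/c}$ factors through $\zc$, so $a_k$ depends only on $k\bmod c$ and $c-k$ is read modulo $c$; in particular, for $k=0$ the identity reads $a_0=\overline{a_0}$, consistent with $a_0=\sum_{j}f(j)\in\R$. There is no genuine obstacle here — the statement is essentially a one-line computation — so the only thing to be careful about is the bookkeeping of the exponentials and making explicit that reality of $f$ is exactly what converts conjugation of each summand into the sign flip in the exponent.
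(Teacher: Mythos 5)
Your proof is correct: the termwise computation of $a_{c-k}$ versus $\overline{a_k}$, with reality of $f$ entering exactly where you say, is the standard and complete argument, and your remark on reading $c-k$ modulo $c$ is sound. The paper itself states this lemma without any proof, so there is nothing to compare against; your write-up simply supplies the routine verification the author omitted.
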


Any complex number $z$ can be decomposed into real and imaginary part, but these cartesian coordinates are not particularly relevant for musical knowledge. Instead we notice that $z$ has a magnitude $|z|$ and a phase $\arg (z)$, with
$$
   z = |z|\, e^{i \arg (z)}
$$
We study these two components separately. The first is fairly well understood nowadays and the present paper will explore the second.

\subsection{Magnitude of Fourier coefficients}
As we have seen with the previous example, it may happen that all exponentials in the sum point towards the same direction (when $i^t = 1 = (-1)^t = (-i)^t$, i.e. when $t$ is a multiple of 4), yielding a maximal value. Such is always the case for coefficient $a_0= 1+1+É 1 =\#(A)$, the cardinality of the pc-set.

In his pioneering work \cite{QuinnPhD}, Quinn discovered that a pc-set with a maximal {\em magnitude} of coefficient $a_k$ among all other pc-sets with the same cardinality ($k$) sports a very special shape, being a {\em maximally even set}. For instance, any pc-set with 7 elements satisfying $|a_7| = 2 +\sqrt 3$ (this is the maximal value for 7-notes pc-sets) must be one of the twelve diatonic collections. More generally, and this is the usual purpose of Fourier analysis, the size of a given Fourier coefficient tells how much the map (or pc-set) is periodic: for instance in the above example when $A = \{0, 3, 6, 9\}$, which is 3-periodic, coefficient $a_{12/3} = a_4$ is maximal. A pc-set with comparatively large $a_4$ can be viewed as {\em minor thirdish}, like $\{0, 1, 3, 6, 9\}$ which achieves the largest value of $a_4$ among 5 notes-pc-sets.

Maximal values of other coefficients have been explored in several papers, like \cite{QuinnJMT, AmiotJMM}. \cite{Tymo2010} interprets relative magnitude in terms of a voice-leading towards a subset of an evenly divided collection, which is only to be expected since these last achieve maximal values and the magnitude is a continuous map; but Tymoczko also noticed the far from obvious fact that this magnitude is almost exactly inversely proportional to the distance to this particularly even neighbour. Quoting his paper:

\begin{quote} 
   The magnitude of a set-classÕs $n^{th}$ Fourier component is approximately inversely proportional to the distance to the nearest doubled subset of the perfectly even $n-$note set-class.
\end{quote}

Being focused on neighbours in a voice-leading sense, Tymoczko is led to `` very finely quantized chromatic universe[s]Ê'' (cf. Fig. 10 in his paper), a move towards continuous gestures and continuous spaces (see his Fig. 3). To quote him again:

\begin{quote}
Interestingly, however, we can see this connection clearly only when we model chords as multisets in continuous pitch-class space [É]
\end{quote}
However, the leap to a real continuum is not unequivocal, since he is rather more interested in microtonal divisions of the octave, e.g. $0, 12/5, 24/5 , 6/5, 18/5$. This makes sense for instance in the orbifolds popularized in \cite{orbifolds, TymoczkoGoM}, wherein any real valued pitch (modulo octave) is represented; but it must be pointed out that this blows out completely the initial setting of maps on $\zd$, which enabled to consider a Discrete Fourier transform in the first place.

\cite{Callender} neatly sidesteps this difficulty in going for the wholly continuous Fourier transform, mincing the period of the complex exponentials involved as fine as needed (for the purpose of finding the maximal possible value of the magnitude of the Fourier transform)\footnote{ This is actually reminiscent of the search for a virtual harmonic spectrum,  or of a common rational approximation to a set of real numbers, a difficult problem connected to Littlewood's conjecture.}.
Again this involves a continuous variation of pitch (class), though this time in a mathematical setting coherent with (continuous) Fourier computations, with a beautifully pedagogical exposition. Reading this paper is suggested to anyone unfamiliar with Fourier transform and its use in the study of musical structures. However, the present paper keeps stubbornly to discrete pitch classes, same as in Quinn's original setting. As we will see, this does not deter from moving to the continuous, or even from {\em mimicking} fractional transpositions.

The approach closest to ours, involving more than the magnitude (size) of Fourier coefficients of discrete pc-sets, is \cite{Hoffman}. Like the two authors quoted above, he is interested in voice-leading, and studies in detail and with mathematical rigour what happens to the values of a Fourier coefficient when one, and only one note, is changed inside a pc-set. If pitch $k$ is moved to $\ell$, the corresponding exponential in $a_t$ changes by quantity $e^{-2i\pi \ell t/c} - e^{-2i\pi k t/c}$, a vector with a finite number of possible values and fixed length\footnote{ A {\em chord} in the geometrical sense, linking vertices of a regular $c-$gon.}. This creates rhombic pictures of stunning beauty in the coordinate planes of the whole Fourier space. This geometrical approach shares the philosophy of the present paper, which can be traced back to a common origin: Quinn's search for a `` landscape of chords''\ wherein large values of some Fourier coefficient (the moutains) pinpoint some prototypical shape (as seen above, a large value of, say, $|a_4|$, denotes a `` thirdish''\ pc-set), see \cite{AmiotJMM, QuinnJMT}. The shape of a pc-set is indeed strongly related to the magnitudes of Fourier coefficients by the following theorem:

\begin{Thm} \label{homometric}
   Two pc-sets share the absolute value (magnitude) of all their Fourier coefficients if and only if they have the same intervallic content.\footnote{A proof can be found for instance in \cite{AmiotJMM}.}
\end{Thm}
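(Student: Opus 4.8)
The plan is to identify the intervallic content of a pc-set $A$ with (essentially) the autocorrelation of its characteristic map $\pmb 1_A$, and then to let the convolution theorem for the DFT do the work, since it turns that autocorrelation into the pointwise squared magnitude $t\mapsto |a_t|^2$. Concretely, I would introduce the \emph{interval function} $c_A:\zc\to\C$ defined by $c_A(t)=\sum_{k\in\zc}\pmb 1_A(k)\,\pmb 1_A(k-t)$; here $c_A(t)$ is the number of ordered pairs $(a,b)\in A\times A$ with $a-b=t$, so $c_A$ records exactly the multiset of intervals occurring in $A$. Because $\pmb 1_A$ is real-valued one has $c_A(-t)=c_A(t)$ (the same reflection symmetry that underlies Lemma~\ref{symCoeff}), so whether ``intervallic content'' means the full function $c_A$ or the classical interval vector indexed by $1,\dots,\lfloor c/2\rfloor$ is immaterial. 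In either reading, two pc-sets $A$ and $B$ have the same intervallic content precisely when $c_A=c_B$ as elements of $\C^{\zc}$.

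Next I would compute $\widehat{c_A}$. Expanding the definition and performing the change of summation index $t=k-j$ (inside $\zc$) gives
$$
   \widehat{c_A}(s)=\sum_{t\in\zc}\Big(\sum_{k\in\zc}\pmb 1_A(k)\,\pmb 1_A(k-t)\Big)e^{-2i\pi ts/c}
   =\Big(\sum_{k}\pmb 1_A(k)\,e^{-2i\pi ks/c}\Big)\Big(\sum_{j}\pmb 1_A(j)\,e^{+2i\pi js/c}\Big)=a_s\,\overline{a_s}=|a_s|^2,
$$
using once more that $\pmb 1_A$ is real-valued in order to recognise the second factor as $\overline{a_s}$. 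Thus the DFT sends the intervallic content of $A$ to the map $s\mapsto|a_s|^2$, and likewise for $B$.

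The theorem is then immediate from the fact recalled above that $f\mapsto\widehat f$ is a linear \emph{isomorphism} of $\C^{\zc}$. For the ``if'' direction, $c_A=c_B$ yields $|a_s|^2=\widehat{c_A}(s)=\widehat{c_B}(s)=|b_s|^2$ for every $s$, hence $|a_s|=|b_s|$. For the ``only if'' direction, $|a_s|=|b_s|$ for all $s$ means $\widehat{c_A}=\widehat{c_B}$, and injectivity of the Fourier transform forces $c_A=c_B$, i.e. the same intervallic content.

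I do not anticipate a genuine obstacle: the heart of the argument is the one-line convolution identity above together with the already-stated bijectivity of the DFT. The only points that demand care are bookkeeping — pinning down the precise convention for ``intervallic content'' and checking that the customary interval vector carries the same information as $c_A$, which is exactly the reality-induced symmetry $c_A(-t)=c_A(t)$ — and making sure the substitution $t=k-j$ is carried out in the cyclic group $\zc$ rather than in $\Z$, so that the double sum factors cleanly.
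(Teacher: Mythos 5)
Your proof is correct: identifying the interval content with the autocorrelation $c_A$, observing that $\widehat{c_A}(s)=|a_s|^2$ via the convolution identity, and invoking the invertibility of the DFT is exactly the standard argument. The paper itself supplies no proof in the text (the footnote simply defers to the cited reference \cite{AmiotJMM}), and the argument given there is this same one, so there is nothing to add beyond noting that your computation and the bookkeeping about the symmetry $c_A(-t)=c_A(t)$ are both sound.
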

The most frequent case is that of T/I related pc-sets, but this theorem also works for Forte's Z relation.

In this last publication however, Quinn states explicitely (p. 59) that he will `` throw out the direction component''. It is time  now to pick up this metaphorical gauntlet, forgetting (almost) all about the magnitude, related to the shape and intervallic structure of pc-sets; and focusing on the direction, the angular component, of Fourier coefficients.

\subsection{Phase of Fourier coefficients}

It is fairly well understood what a large $a_k$ means: it tells how well the pc-set coincides with an even division of the octave by step $c/k$. What about its direction?
As \cite{Hoffman} aptly puts it, 
\begin{quote}
The direction of a vector indicates which of the transpositions of the even chord associated with a space predominates within the set under analysis. 
\end{quote}
This can perhaps be better clarified with the following technical lemma, pinpointing the effect of T/I operations:

\begin{Lemma}\label{transp}
   Transposition of a pc-set by $t$ semitones rotates the $k^{th}$ Fourier coefficient $a_k$ by a $-2 k t \pi/c$ angle.
   
Any inversion of a pc-set similarly rotates the {\em conjugates} of the Fourier coefficients.
\end{Lemma}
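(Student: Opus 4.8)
The plan is to compute directly, starting from the defining formula $\widehat{\pmb 1}_A(k)=\sum_{j\in A}e^{-2i\pi jk/c}$ given earlier. For the transposition statement, I would write $T_t(A)=\{j+t : j\in A\}$ and substitute: the Fourier coefficient of the transposed set is $\sum_{j\in A}e^{-2i\pi(j+t)k/c}$. The key step is that $e^{-2i\pi(j+t)k/c}=e^{-2i\pi tk/c}\,e^{-2i\pi jk/c}$, so the sum factors as $e^{-2i\pi tk/c}\sum_{j\in A}e^{-2i\pi jk/c}=e^{-2i\pi tk/c}\,a_k$. Since multiplication of a complex number by the unit-modulus constant $e^{-2i\pi tk/c}$ leaves the magnitude fixed and adds $-2kt\pi/c$ to the argument, this is exactly a rotation of $a_k$ by the claimed angle.

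For the inversion statement, I would first treat the canonical inversion $I_0:j\mapsto -j$, so that $I_0(A)=\{-j : j\in A\}$, and compute $\widehat{\pmb 1}_{I_0(A)}(k)=\sum_{j\in A}e^{+2i\pi jk/c}$. The crucial observation is that each summand is the complex conjugate of $e^{-2i\pi jk/c}$, and conjugation is additive over the sum, so $\widehat{\pmb 1}_{I_0(A)}(k)=\overline{a_k}$. A general inversion $I_t:j\mapsto t-j$ is the composite $T_t\circ I_0$, so combining with the transposition result already proved gives $\widehat{\pmb 1}_{I_t(A)}(k)=e^{-2i\pi tk/c}\,\overline{a_k}$; that is, the \emph{conjugate} $\overline{a_k}$ is what gets rotated by $-2kt\pi/c$, which is the assertion. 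One may optionally phrase the conjugation step through Lemma~\ref{symCoeff}, noting $\overline{a_k}=a_{c-k}$, but the direct conjugation argument is cleaner.

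I do not expect a genuine obstacle here: the computation is a one-line change of variables in each case. The only points needing care are bookkeeping and conventions — being explicit that ``inversion'' of a pc-set may be taken around any axis and hence decomposes as $T_t\circ I_0$, and being precise about the sign of the rotation angle (the minus sign comes from the $e^{-2i\pi\,\cdot\,/c}$ convention adopted in the definition of the DFT). It is also worth stating clearly what ``rotates the conjugates'' means, so the reader does not mistakenly expect $a_k$ itself to be rotated under inversion; the honest statement is that inversion acts on the Fourier side by $a_k\mapsto e^{-2i\pi tk/c}\overline{a_k}$, i.e. a reflection followed by a rotation. Once these conventions are fixed, both halves of the lemma follow immediately from the factorization of the exponential and the additivity of complex conjugation.
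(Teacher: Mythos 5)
Your proof is correct and follows essentially the same route as the paper: a change of variables in the defining sum, factoring out $e^{-2i\pi kt/c}$ for transposition, and recognizing termwise complex conjugation for the inversion $j\mapsto -j$. Your explicit decomposition of a general inversion as $T_t\circ I_0$ is a minor elaboration of what the paper leaves implicit with its ``for instance,'' but it is not a different argument.
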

\begin{proof}
  $$ \widehat {\pmb 1}_{A+t}(k) = \sum_{m\in (A+t)} e^{-2i\pi k m/c}
    = \sum_{m-t\in A} e^{-2i\pi k (m-t)/c} e^{-2i\pi k t/c}
    = e^{-2i\pi k t/c}\times \fa(k)
    $$
 Similarly for inversion, one gets for instance 
 $ \widehat {\pmb 1}_{-A}(k) = \sum\limits_{-m\in A} e^{-2i\pi k (-m)/c}= \overline{\fa(k)}$.
\end{proof}

In a nutshell, the magnitude of $a_k$ tells us something about the {\em shape} of the pc-set, about its {\em melodic} possibilities, while the phase is about {\em harmony}. Fig. \ref{transpDiatonic} shows 
the different phases of the $a_5$ coefficient for the twelve diatonic collections (since the magnitudes are identical, these coefficients move on a circle). They are rotated by $-5\pi/6$ whenever the scale is transposed by one semitone, or equivalently rotated by $\pi/6$ through a transposition by fifth, i.e. this phase is simply the position of the diatonic collection on the cycle of fifths\footnote{Notice possible the redundancies. As some notorious software publishers are wont to say, `` it's not a bug, it's a feature''. For instance, a tritone transposition of a seventh does not change coefficient $a_4$, a pleasant characteristic considering the use of such transpositions in jazz, cf. \cite{TymoczkoGoM} for a geometri discussion of this.}.

\dessin{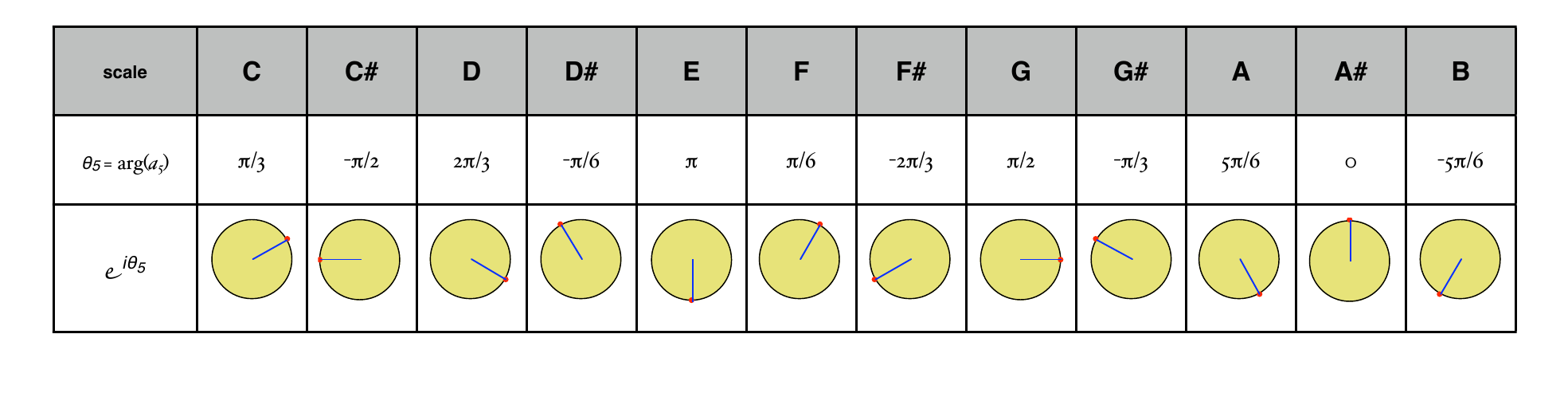}{Variations of $a_5$ for all diatonic scales}{15 cm}{transpDiatonic}

In other words, transposition of a pc-set, in the space of quantities of pcs, translates into a {\em complex rotation} in Fourier space, each Fourier coefficient being multiplied by some root of unity.\footnote{The matrix of this transformation is diagonal and unitary.}

\section{Angular position of triads}
\subsection{The torus of triads}
\dessin{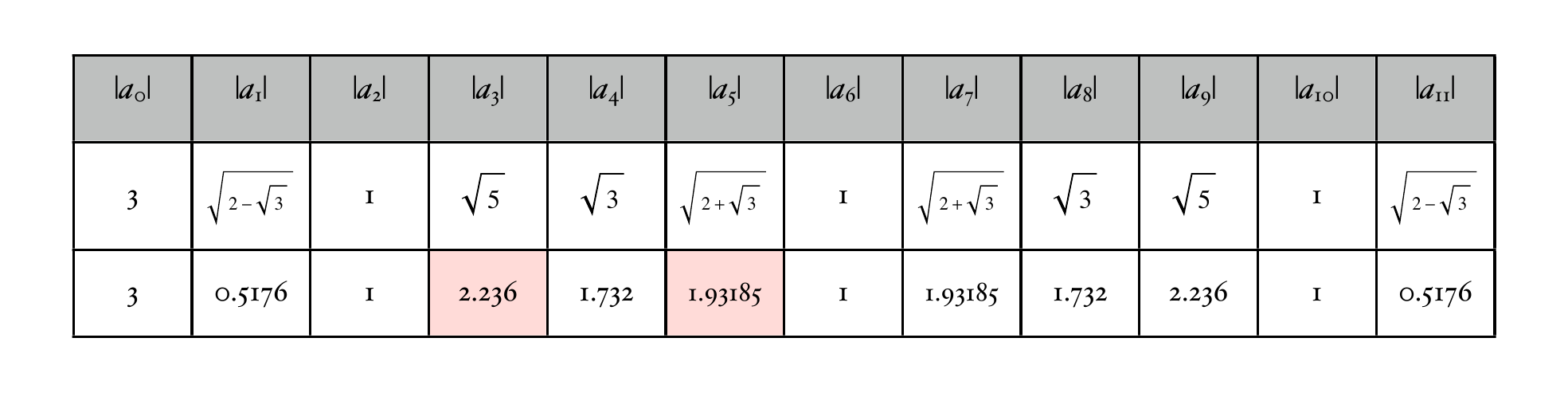}{Magnitudes of all Fourier coefficients for a triad}{15 cm}{cfTriad}

From Thm. \ref{homometric}, we know that all triads share the magnitude of their Fourier coefficients: only the phases, the angular parts, will differ. On Fig. \ref{cfTriad} we can read the values of these magnitudes\footnote{ One example to show how the table is derived: using for instance the triad $\{0, 3, 7\}$, one computes 
$$a_4 = e^0 + e^{-2 i \pi 3\times 4/12} + e^{-2 i \pi 7\times 4/12}
= 1 + 1 +  e^{-2 i \pi /3} = 2 + (-1/2) + i \sqrt{3}/2$$
 whose magnitude is
$\sqrt{ {\bigl(\dfrac32\bigr)}^2 + \dfrac34} = \sqrt{\dfrac{12}4} =\sqrt 3$.}.
This suggests two comments:
\begin{enumerate}
   \item
   The values are the same read backwards (the $0^{th}$ one excepted); this stems from Lemma \ref{symCoeff}.
   \item
    Coefficients $a_3$ and $a_5$ are the largest ($a_4$ comes close).
\end{enumerate}
This last point can be interestingly compared with \cite{Krumhansl} where the same coefficients appear prominent in a quite different setting (statistical data about the perception of a tonal environment)\footnote{ I am indebted to Aline Honingh for the connection.}.
Roughly speaking of course, it means that a triad is more fifthish and (major) thirdish than, say, chromatic or whole-tonish. Let us move to more precise notions.

Since Fourier coefficients move at different paces when a pc-set is transposed, it is difficult to visualise their movement in Fourier space. The first step is to select a region of this space where all triads can be found and easily observed. If we consider the coefficients $a_0, a_1, É a_{11}$ as coordinates in 
$\C^{12}$, all triads satisfy the following equations:
$$a_0 = 3, | a_1| = 0.5176, |a_2]=1, |a_3|=2.236, É |a_{11}|=0.5176$$
Since the condition $|z| = r$ defines a circle with radius $r$ in \C, this set of equations defines a product of circles, that is to say a {\em torus} in $\C^{12}$ (see Fig. \ref{torus}).
\dessin{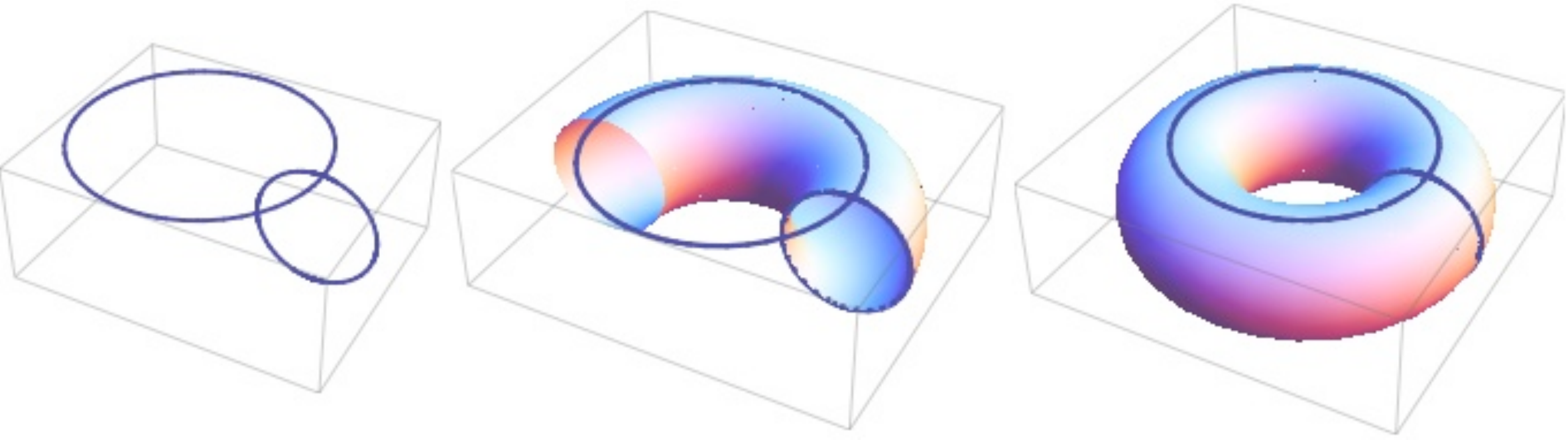}{A product of circles is a torus}{15 cm}{torus}

We can forget about coefficients $a_0$ and $a_7$ to $a_{11}$ because of redundancy. The remaining space is a 6D-torus, defined by the magnitudes of coefficients $a_1$ to $a_6$.

It is still difficult to visualise a 6D manifold in $\C^6$ alias $\R^{12}$! We must trim down this space to something more comfortable.
The values of the phases of Fourier coefficients of all triads appear on the next tables (in algebraic or numeric form).

\dessin{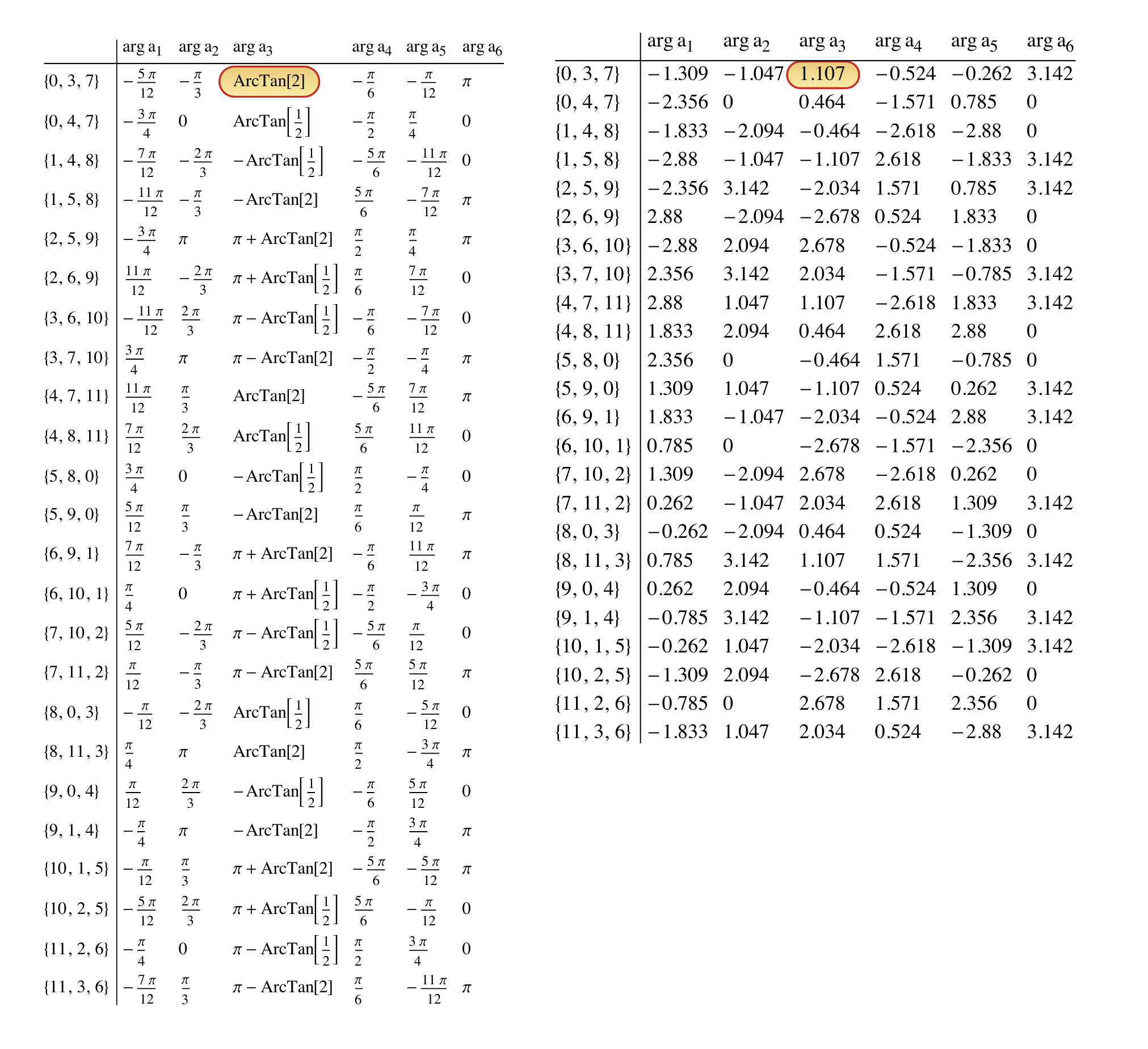}{Arguments of Fourier coefficients of triads}{18cm}{phasesTriads}
For instance, for the triad $\{0, 3, 7\}$ one computes $a_3 = 1+2i$, hence the phase is $\arctan 2$ or approximately $1.107$ (outlined on both tables).

Notice on Fig. \ref{phasesTriads} that 
\begin{itemize}
 \item
 one coordinate alone does not allow to discriminate between all 24 triads (not even between, say, major triads). For instance, major third transposition provides the same value for $a_3$ according to Lemma \ref{transp}.
 \item
 except for $\arg(a_3)$, all angles are commensurable with $\pi$.
 \item $a_6$ takes only two values, depending on the number of odd pitches in the triad.
\end{itemize}
It is feasible to retain only $a_2$ and $a_6$, allowing to distinguish between triads; but this does not make a lot of musical sense -- perhaps because these coefficients are of secondary importance for triads, as can be seen both from their magnitude and for musical reasons ($a_2, a_6$ have to do with the {\em whole-toneness} and {\em tritoneness} of pc-sets).

I selected instead coefficients $a_3$ and $a_5$, whose conjunct values are different for all 24 triads. The precise definition is the following:

\begin{Def}
  The 3-5 phase coordinates of a pc-set is the pair $(\arg (a_3), \arg (a_5))$.
  
  The 3-5 torus of triads is the $2D$ torus defined in $\C^2$ by equations
  $$
     |a_3] = 2.236 \qquad  |a_5] = 1.93185 
  $$
  and parametrized by the pair of phases defined above.
\end{Def}
In the sequel, if I mention `` the torus''\ without qualification, it will refer to that particular cross-section of the 6D overall torus of all phases.
Any point on this torus can be visualized with a pair of angles, parametrizing the standard torus $\pmb T^2$.

\dessin{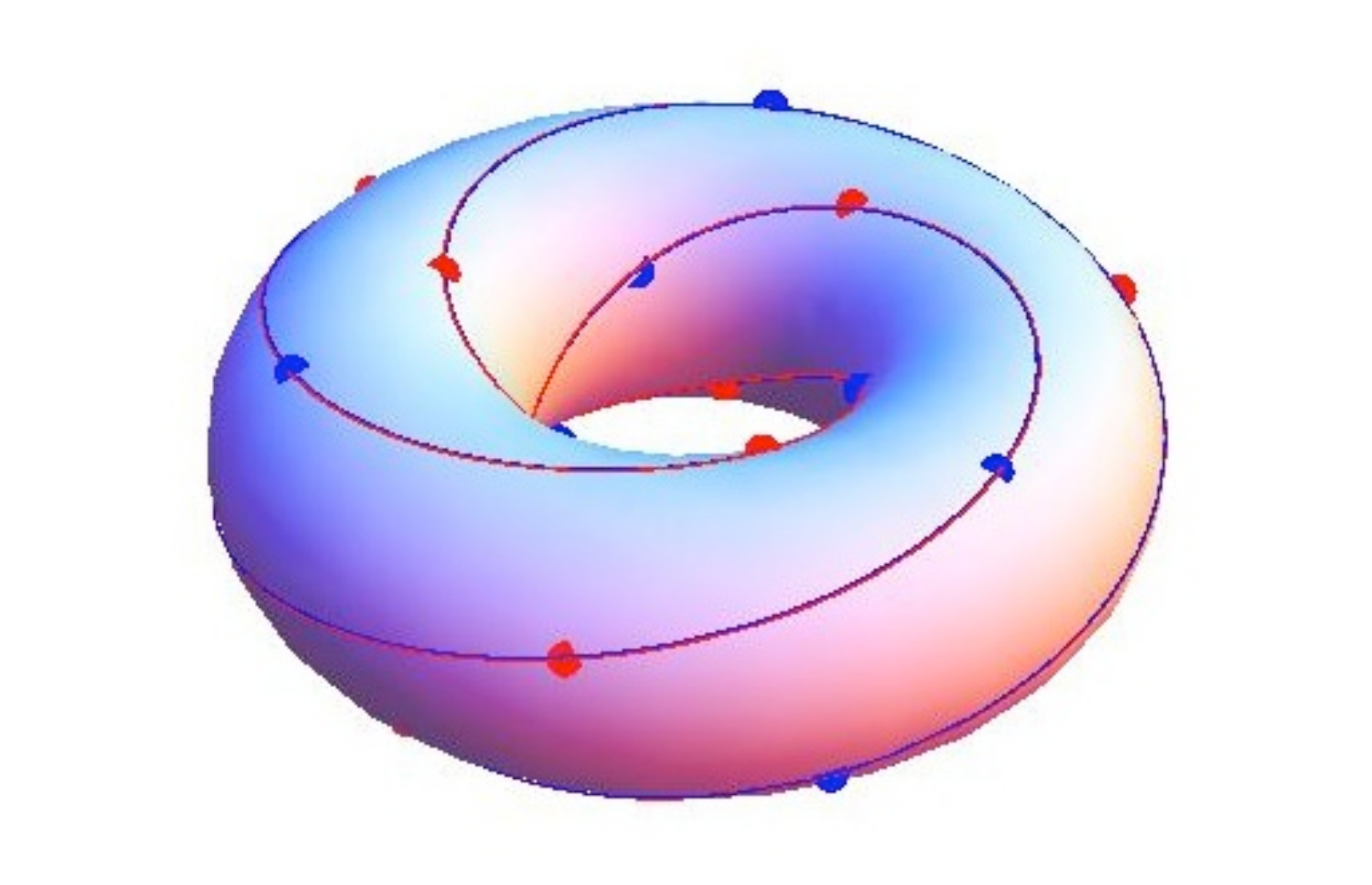}{The $3-5$ torus of triads}{9cm}{torusTriads}

On Fig. \ref{torusTriads}, major triads are the red dots and minor triads are blue. The lines connecting them will be discussed in section \ref{gestures}. For the time being, we can introduce a natural distance on this model and appreciate its musical meaning. The coordinates being angles (modulo $2\pi$), this allows to use any standard distance between pairs of real coordinates, for instance
$$
    d\bigl( (a, b), (c, d) \bigr) = \| (a-c, b-d) \|_2 = \sqrt{(a-c)^2 + (b-d)^2}
$$
This can be extended to real numbers modulo $2\pi$ by allowing $a, bÉ$ to be replaced by any representative $a+2k\pi, b+2\ell \pi É, k,lÉ \in \Z$, and retaining the minimum value\footnote{ This is the {\em  quotient metric} on $\R/2\pi\Z$.}. For instance, the distance between 
$\{0,3, 7\}$ and $\{2, 6, 9\}$ is
\begin{multline*}
    \sqrt{(1.107 - (-2.678)\mod 2\pi)^2 + (-0.262 - 1.833)^2} 
    = \sqrt{(3.786 \mod 2\pi)^2 + 2.095^2} \\
    = \sqrt{(3.786 - 2\pi)^2 + 2.095^2} = \sqrt{2.5^2 + 2.095^2} = 3.26 
\end{multline*}

This is a very physical measurement of the distance between two points on the torus Fig. \ref{torusTriads}, which can be obtained with a ruler if the torus is carved and unfolded on a table, see Fig. \ref{torusUnfolded}. This picture must be understood as illimited, each side of the picture being glued to the opposite one.

\dessin{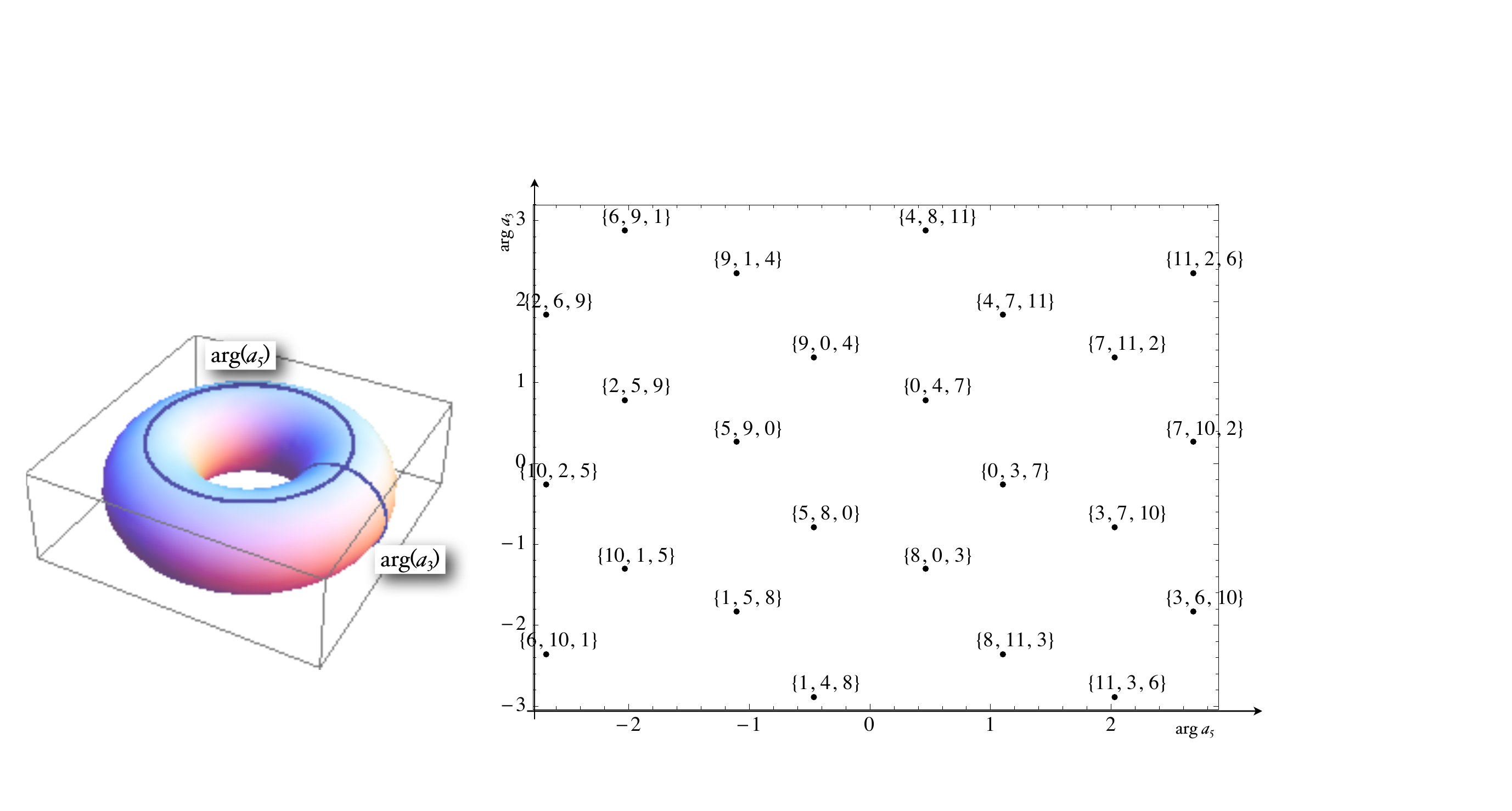}{The torus of triads unfolded}{15cm}{torusUnfolded}

This picture may look familiar to some readers, since the relative disposition of triads is the same as in the dual Riemannian Tonnetz! this can be checked on the distance table Fig. \ref{tableDistances}, wherein we see that the immediate neighbours of (say) C major e.g. $\{0, 4, 7\}$ are its LPR transforms, E minor, A minor and C minor.

\dessin{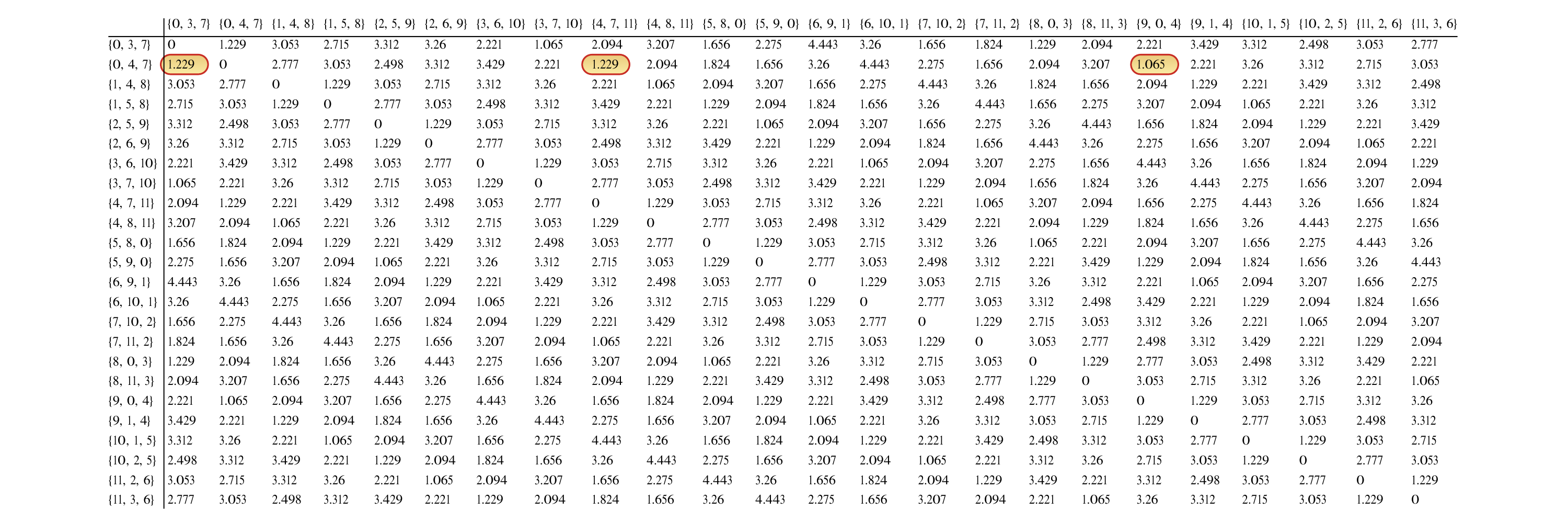}{Angular distances between triads on the torus}{18cm}{tableDistances}

This result was not a goal of my research (nor even expected, if truth be said); it does however vindicate the musical pertinence of the torus model. A slightly distorted picture could be drawn, making the distances to all three neighbours identical by stretching the canvas, with the formula
$$
   d(A, B) =\| \bigl(   0.7365 (\arg(a_5) - \arg(b_5)), \arg(a_3) - \arg(b_3)\bigr) \|_2
   \qquad
$$
where $(a_3, a_5)$ (resp. $(b_3, b_5)$) are the Fourier coefficients of triad $A$ (resp. $B$).

This provides a satisfying answer to TymoczkoÕs pessimistic though well-argued 
comment in \cite{TymoczkoMTO}: 
\begin{quote}
Thus, neither voice leading nor common tones allow us to characterize Tonnetz distances precisely. We 
seem forced to say that Tonnetz-distances represent simply the number of parsimonious moves needed 
to get from one chord to anotherÑand not some more familiar music-theoretical quality. 
\end{quote}
This should not come as a complete surprise (yes, it is easier to be wise after the facts), since the 
geometry of the Tonnetz involves neighbours one (major) third or one fifth away, closely related to the Fourier coefficients involved. It might also bear some relationship with the aforementioned study by \cite{Krumhansl} about the perception of pitch classes in a tonal environment, though it is still  early to postulate a direct perception of (something equivalent to) Fourier coefficients of musical structures in the brain.

\subsection{What appears on the torus and what does not}
As suggested by the very different mathematical properties of magnitude and phase of Fourier coefficients (see above), they pinpoint equally different musical qualities of pc-sets. Since the phase is related to which translate of a prototypical chord best coincides with the given pc-set, it tells about harmonic relationships, not voice-leading. We may therefore feel elated, when noticing that in the case of triads, the topology found on the torus is that of the dual Tonnetz, where the neighbours of a triad are those reached by parsimonious movements. One deep explanation of this relationship between harmonic and voice-leading moves can be found in \cite{TymoczkoGoM} where the author discusses the effect of transposition on almost equal divisions of the octave, such as triads. Indeed, the perfectly equal divisions, augmented triads, appear in some theoretical models and are discussed in the light of the torus in subsection {\bf `` Other triads''} below.
Another rather trivial correlation with voice-leading is the {\em continuity} of the torus coordinate system\footnote{ The map $A\mapsto (\arg a_5, \arg a_3)$ is smooth wherever it is defined.}: in non mathematical words, a small change in the pc-set (such as  moving one note by one semitone) induces a small change in the Fourier coefficients. This depends on which coefficient however, see \cite{Hoffman} for a thorough study of such moves.\medskip

I borrow an illuminating example from \cite{Callender}:  in Fig. \ref{QRS}, chord R is close to S in voice-leading terms, and harmonically close to chord Q. Indeed the Fourier coefficients' phases are close for Q, R and stand apart for R and S.

\dessin{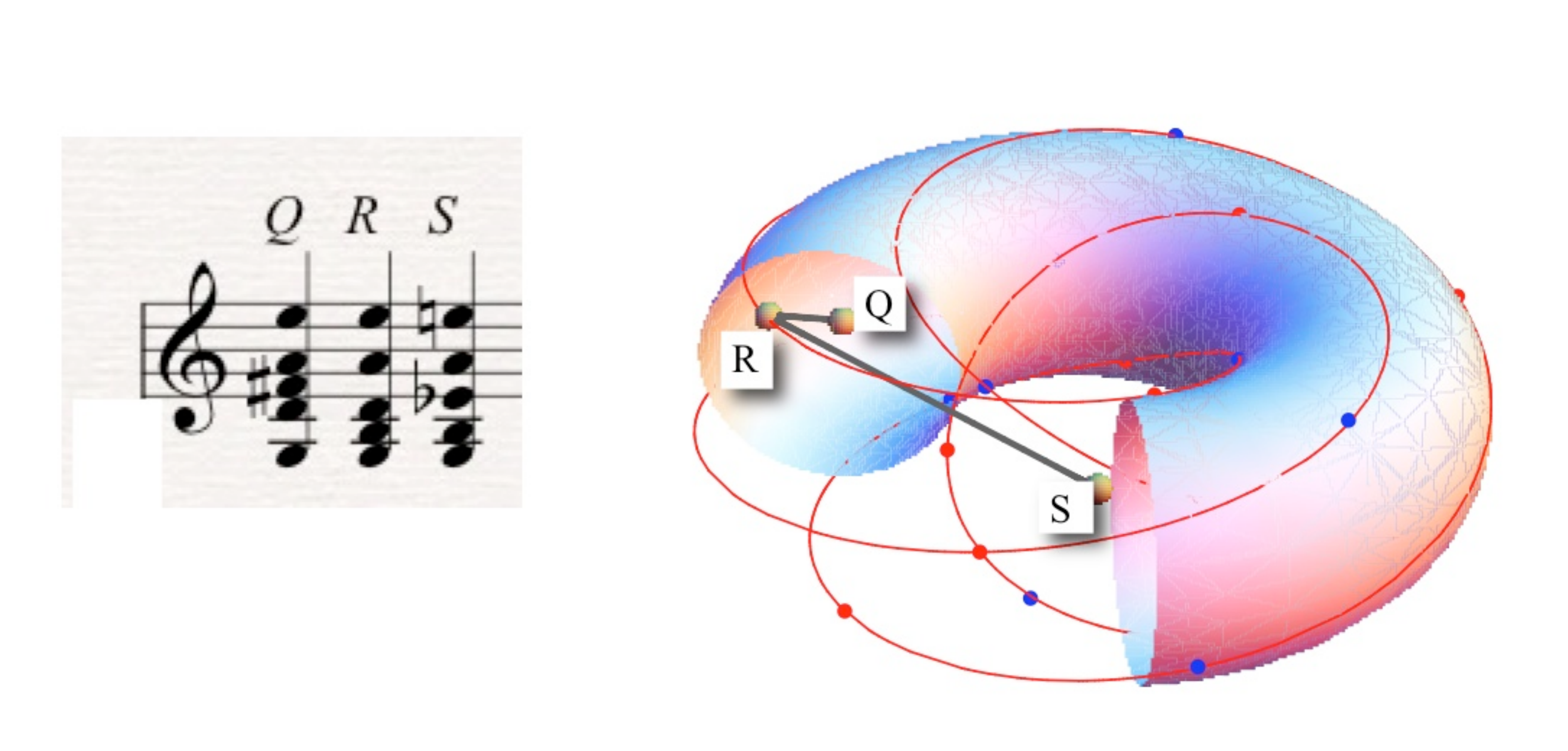}{Angular distances between complex chords}{15cm}{QRS}

All in all, a good correlation between phase distance and voice-leadings should be seen as more than a coincidence, but less than mandatory. However, when taken together with the correlation established by \cite{TymoczkoJMT} between magnitude of coefficients and neighbourhood to special pc-sets, it strengthens the case of Fourier coefficients in the study of voice-leading.
\newpage

\section{The continuous torus}
\subsection{Gestures}\label{gestures}
In \cite{Mazzola} Guerino Mazzola introduced\footnote{ The authors even mentioned Fourier spaces as a natural example of a domain for such gestures, though without getting particular.} a topological formalism for {\em gestures}, i.e. continuous paths between discrete objects or events (say from one triad to another), answering a question asked by Lewin in \cite{Lewin}: what happens between $s$ and $t$ ?

Here we do not follow Mazzola in his subsequent explorations of the general notion of gestures, but focus on a simple case: we will explore the torus model as a container for natural paths between triads, or other pc-sets. Let us begin by recalling how one transposes a triad, in phase coordinates: say $A$ is a major triad, with angular coordinates $\arg a_5, \arg a_3$ and set $B = A + t$, a transposition by $t$ semi-tones; then the phase coordinates of the new triad are
$$
   \arg b_5 = \arg a_5 - 5 \pi\, t/6 \qquad  \arg b_5 = \arg a_5 -  \pi\, t/2  
$$
according to Lemma \ref{transp}.

Now if we allow $t$ to vary continuously, rotating both Fourier coefficients (albeit with different speeds), the point described by these coordinates will draw a line on the torus, which includes all major triads once when $t$ varies between 0 and 12. It is the red line on Fig. \ref{torusTriads}, the blue line being the same thing for minor triads.

\dessin{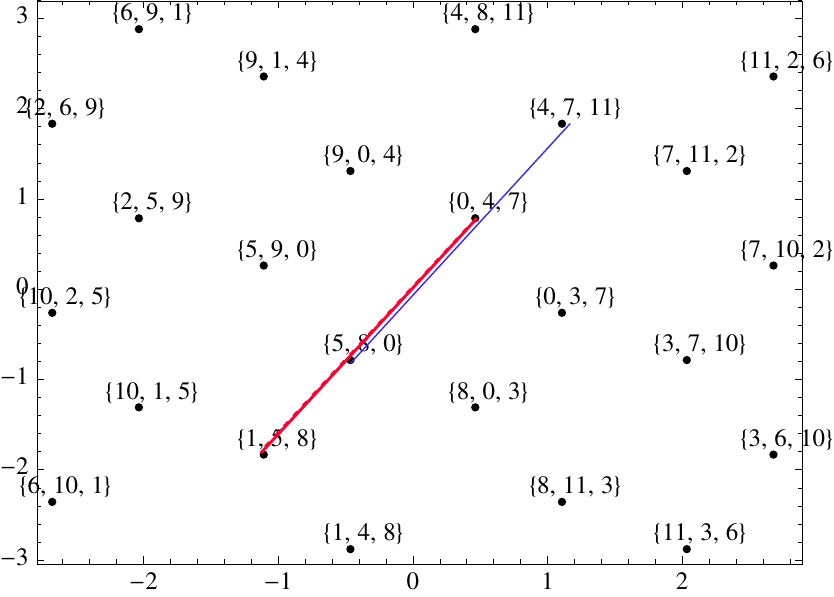}{Major triads are close to minor triads}{10cm}{unfolded}

This line is non trivial in many ways:
\begin{enumerate}
 \item
 It is a geodesic, the shortest line between all major triads (a straight line on the torus unfolded, see Fig. \ref{torusUnfolded}).
 \item
 It cannot be contracted to a point (the fundamental group of a Torus is non trivial!).
 \item 
 It is very close to the blue line of minor triads.
\end{enumerate}
This last coincidence, if it may be called that, deserves another look. First we must make sense of the points on the line which are {\em not} triads. Keeping as many parameters unchanged as possible (other Fourier coefficients, the magnitude of $a_3$ and $a_5$) one can reverse the Fourier transform and obtain a map in the original space $\C^{\zd}$ of `` quantities of pcs''. These maps no longer characterize genuine pc-sets, because their values, albeit real, are not 0's or 1's anymore\footnote{Actually one can use a transversality argument to prove that, generically, moving away from a pc-set introduces some {\em negative} coordinates in pc-space.}. A good example is the closest point to F minor on the red path of major triads, cf. Fig. \ref{unfolded} with a close-up on the unfolded Torus.

Precise measurements\footnote{ See table \ref{tableDistances}.} show that the $\arg (a_5)$ coordinate of F minor ($\{5, 8, 0\}$) is exactly at $60\%$ between $C$ and $C\#$ major ($\{0, 4, 7\}$ and $\{1, 5,  8\}$), but on the vertical dimension there is a $2\%$ difference (on the $\arg (a_3)$ coordinate). Rotating the coefficients  of C major  by $-0.056$ radiants ($\times 5$ for $\arg (a_5)$, $\times 3$ for $\arg (a_3)$ according to the formula above) and applying Inverse Fourier transform yields the following map in $\C^{\zd}$, or distribution of the 12 pitch classes:
$$
\{ 1.0087, 0.0840, -0.00806, -0.0530, 0.943, 0.164, -0.139, 0.999, 0.118, -0.0733, 0.0776, -0.120 \}
$$
I will not ask the reader to stretch his credibility to the belief that this is a pc-set. Still, it {\em is} close to
$$
 \{1, 0, 0, 0, 1, 0, 0, 1, 0, 0, 0, 0\} = F\ \text{minor}.
$$
as can be seen on Fig. \ref{fakeFminor}.

\dessin{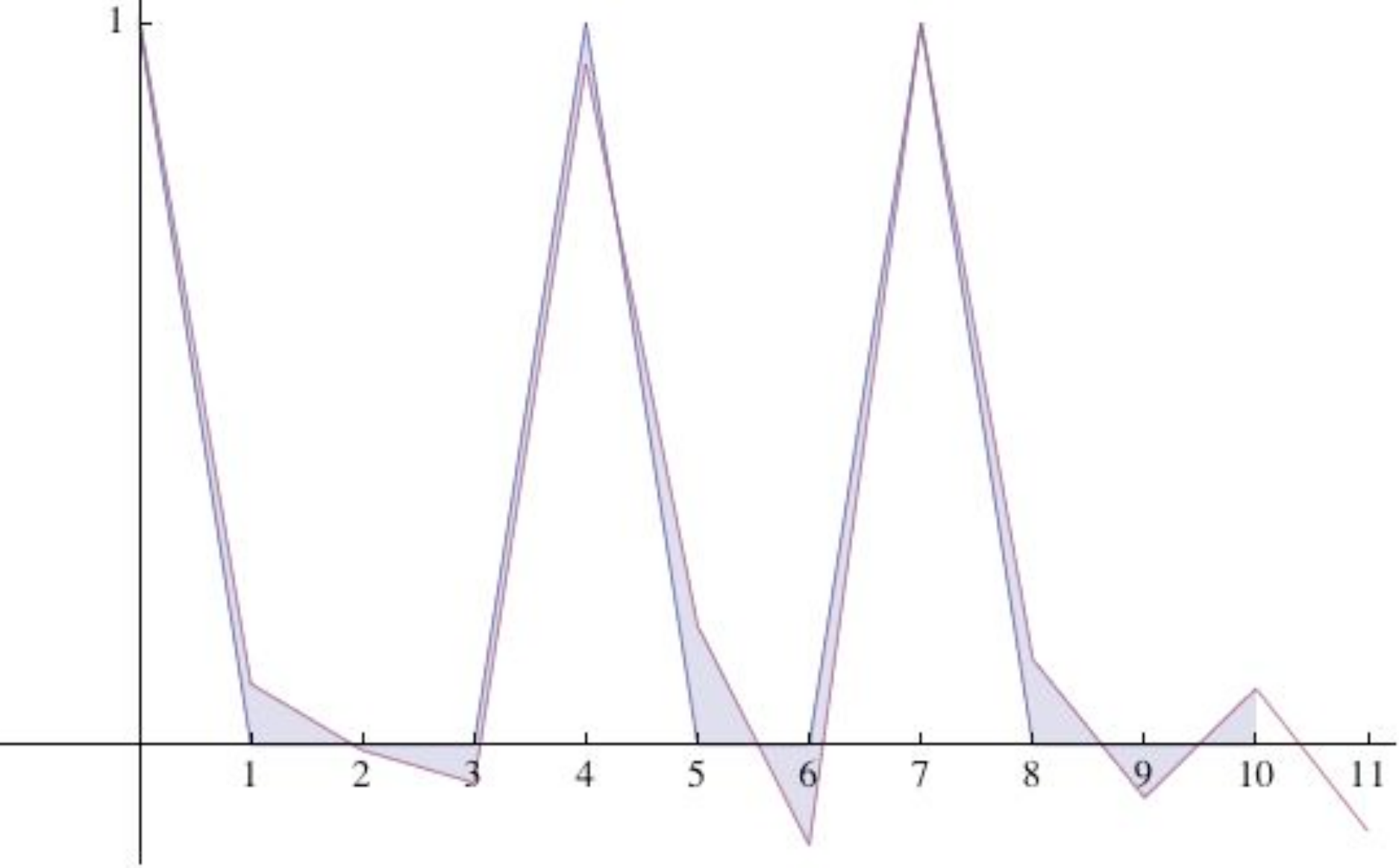}{F minor and its closest neighbour on the red line}{8cm}{fakeFminor}
Despite the fact that we are still working inside $\C^{\zd}$ and not in an orbifold or any space with continuous distribution of pitch, we have managed to mimick a transposition of C major by 0.57 semitone. Unfortunately this paper cannot include sound files (there is one on my home page), but such a transposition (in physical space) does indeed provide a good approximation of F minor\footnote{ It must be pointed out that the red and blue lines are close {\em on this particular torus}, with phases of coefficients $a_3$ and $a_5$. Most other coefficients are much further apart, especially $a_1$}.

\subsection{Splitting the inversion and other exotic gestures}
Transposition, discrete or continuous, can be done quite easily in the torus model.
It is more difficult and interesting to move continuously between a pc-set and one of its inverses. \cite{Peck} explored the question of splitting the inversion in several submoves, and found surprisingly esoteric answers. His approach consisted in solving the equation $r^k = I$, where $I$ is an inversion, $r$ is the looked-for $k^{th}$ root of $I$ in some group, usually a subgroup of the permutation group or one of its linear representations. A puzzling fact is that what works between, say, C major and C minor does not work anymore between other triads\footnote{ This is probably related to Mandereau's stunning result in \cite{Homometry}, expressing that homometry is not a group action.}. I will develop another theory below, wherein it is possible to split the inversion into infinitely many submoves, down to a continuous gesture. The seminal fact is the following:

\begin{Thm}\label{spectralUnit}
    Pc-sets $A$ and $B$ have the same interval content if, and only if, there exists a map $u\in\C^{\zc}$ such that
    $$
       \fa \times \widehat{u} =\fb \quad\text{with}\quad \forall k\in\zc,\ |\widehat{u}(k)| = 1
    $$
    ($\times$ denotes termwise multiplication)
\end{Thm}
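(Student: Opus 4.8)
The plan is to recognize that, by Theorem~\ref{homometric}, ``$A$ and $B$ have the same interval content'' is nothing but the condition $|a_k| = |b_k|$ for every $k\in\zc$; so the statement will follow once I exhibit a dictionary between that magnitude equality and the existence of the claimed unimodular factor $\widehat u$. The whole content of the theorem is therefore Theorem~\ref{homometric} plus a pointwise division in Fourier space.

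For the direction ($\Leftarrow$): if $\fb = \fa\times\widehat u$ with $|\widehat u(k)| = 1$ for all $k$, then $|b_k| = |a_k|\,|\widehat u(k)| = |a_k|$, and Theorem~\ref{homometric} gives that $A$ and $B$ are homometric. For the converse ($\Rightarrow$): assuming $|a_k| = |b_k|$ for all $k$, I would construct the map $\widehat u$ directly in Fourier space and then define $u$ as its inverse DFT --- legitimate because $f\mapsto\widehat f$ is a linear isomorphism of $\C^{\zc}$, so every map in Fourier space is $\widehat u$ for exactly one $u$. On each index $k$ with $a_k\neq 0$, set $\widehat u(k) = b_k/a_k$; this has modulus $|b_k|/|a_k| = 1$ and satisfies $a_k\,\widehat u(k) = b_k$. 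On each index $k$ with $a_k = 0$, the magnitude hypothesis forces $b_k = 0$ as well, so the equation $a_k\,\widehat u(k) = b_k$ reads $0 = 0$ and holds whatever value we assign; I simply put $\widehat u(k) = 1$ there. This defines a map of constant modulus $1$ satisfying $\fa\times\widehat u = \fb$, and $u$ is recovered by inverse Fourier transform.

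The only delicate point --- and thus the ``main obstacle'', modest as it is --- is the handling of the indices where $a_k$ vanishes: one must observe that these are exactly the indices where $b_k$ vanishes (by the magnitude equality), so no inconsistency arises and $\widehat u$ is merely underdetermined there. A refinement worth including is that $u$ can in fact be chosen \emph{real}-valued (which is what one wants for the musical ``gesture'' interpretation in the next subsection): it suffices to pick the free values so that $\widehat u(c-k) = \overline{\widehat u(k)}$, the conjugation symmetry of Lemma~\ref{symCoeff}, which is precisely the condition for the inverse DFT to be real. On the indices with $a_k\neq 0$ this is automatic, since $\fa$ and $\fb$ already obey that symmetry and hence so does their quotient; on the remaining indices, which pair up as $\{k, c-k\}$ (with $k = c/2$ its own partner, and $k=0$ never among them since $a_0 = \#A\neq 0$), the constant choice $\widehat u(k) = 1$ respects it. Then the inverse DFT of $\widehat u$ is a real-valued $u\in\C^{\zc}$, completing the argument.
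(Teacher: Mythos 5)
Your proposal is correct and follows essentially the same route as the paper, which reduces the statement to Theorem~\ref{homometric} and notes (in a footnote) that one picks a unit-modulus $\xi(t)$ with $\fa(t)=\xi(t)\,\fb(t)$ for each $t$, ``even when the Fourier transform vanishes'' --- precisely your pointwise division with the zero-coefficient caveat handled by the magnitude equality. Your extra remark that the free values can be chosen to respect the conjugation symmetry of Lemma~\ref{symCoeff}, making $u$ real-valued, is a small refinement the paper does not spell out.
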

This is really the same as Thm. \ref{homometric}, introducing maps with unit Fourier coefficients. This is explained with more details\footnote{ The gist of the proof is that $| \fa (t)| =  | \fb (t)|$ for all $t$, hence there exists a complex number with unit length $\xi(t)$ such that $ \fa (t) =  \xi(t) \times \fb (t)$, even when the Fourier transform vanishes. } in \cite{Homometry} and alternative (matricial) models are described in \cite{AmiotSethares}, allowing the computation of $u$ without using Fourier and inverse Fourier transform.

\begin{Def}
  Such a map $u\in\C^{\zc}$ with $\forall k\in\zc,\ |\widehat{u}(k)| = 1$ is called a {\em spectral unit}.
\end{Def}
The set of spectral units is thus isomorphic, via the Fourier transform, to a product of unit circles, which endows it with a group structure. Moreover, any spectral unit has several $k^{th}$ roots, for any value of $k$: if the Fourier coefficients of $u$ are $\widehat u (0)= e^{i\alpha}, É \widehat u (c-1) = e^{i\omega}$, then the map $v$ with Fourier coefficients $\widehat v (0)= e^{i\alpha/k}, É \widehat v (c-1) = e^{i\omega/k}$ is a $k^{th}$ root of $u$.

The red and blue lines of major/minor triads are instances of such gestures, orbits of the action of the continuous subgroup  
$$
     G_{\widehat u} = \left\{(1, e^{-2i\pi\,t/12}, e^{-4i\pi\,t/12}, É e^{-2k i\pi\,t/12}, É
     e^{-22 i\pi\,t/12}) ;       				\ t\in\R \right\}
$$
i.e. restricted to the 3-5 torus, phases with form $(-\pi\,t/6, -  \pi\,t/2)$.

More surprisingly, the spectral unit that connects C major ($\{0, 4, 7\}$) with A minor ($\{0, 4, 9\}$) is defined by
$$
  \widehat u = (1, -\dfrac{\sqrt 3}2 + \dfrac i 2, -\dfrac 1 2 + i\dfrac{\sqrt 3}2,
    \dfrac 3 5 - \dfrac{4i}5, \dfrac 1 2 + i\dfrac{\sqrt 3}2, \dfrac{\sqrt 3}2 + \dfrac i 2, 1, É)
$$
which is of {\em infinite order} (because the third Fourier coefficient 
$\widehat u(3) =\dfrac 3 5 - \dfrac{4i}5$ has infinite order, its phase being incommensurable with $\pi$) 
in the group of spectral units. Since the fifth coefficient has order 12, applying the associated transformation beyond A minor yields an infinite sequence of points on the 3-5 torus that is {\em dense} in 12 circles, each characterized by one triad. These pc-sets have Fourier transforms equal to $\widehat u^k \times \fa, k\in\Z$, where $A$ is the C major triad. This orbit is suggested on Fig. \ref{infiniteOrbit}. A movie is available on my web page.

\dessin{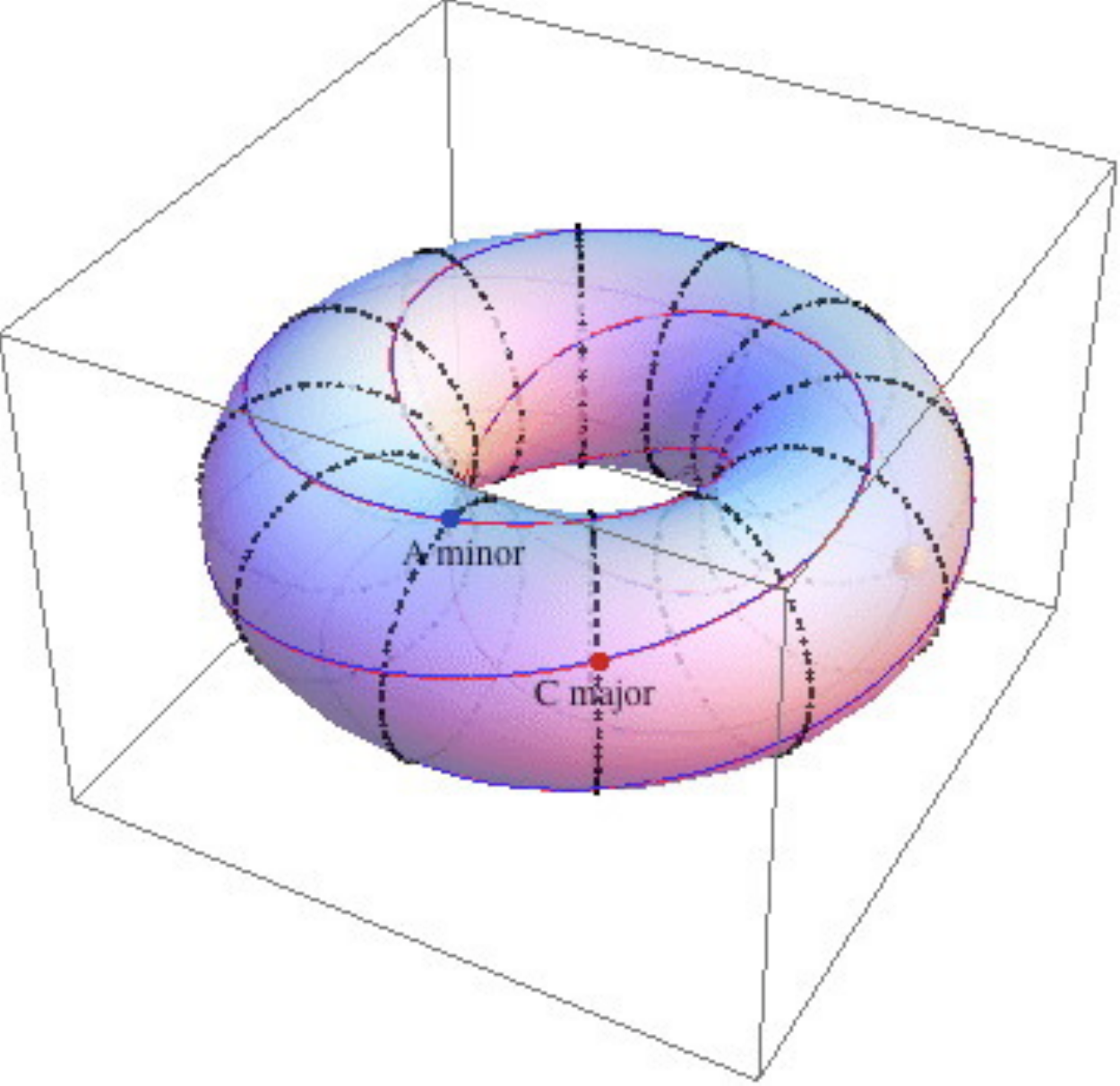}{Inversion beyond A minor}{11 cm}{infiniteOrbit}

Here I did not try to interpolate an infinite gesture, quite the reverse -- the discrete iteration already yields a suggestion of the continuous. 
\newpage

Let us end with connecting Z-related (not T/I related) pc-sets. The famous all-interval pc-sets $\{0,1,4, 6\}$ and $\{0,1,3,7\}$ are connected by the spectral unit defined by
$$
   \widehat u = \bigl(\dfrac{\sqrt 3}2 - \dfrac i 2, \dfrac 1 2 + i\dfrac{\sqrt 3}2, -i, 1, 
   \dfrac{\sqrt 3}2 - \dfrac i 2, 1, É \bigr)
$$
It has order 12 in the group. But the transforms are not all pc-sets. We get 9 genuine pc-sets and 3 generalized distributions, cf. Fig. \ref{Zrelated}.

\dessin{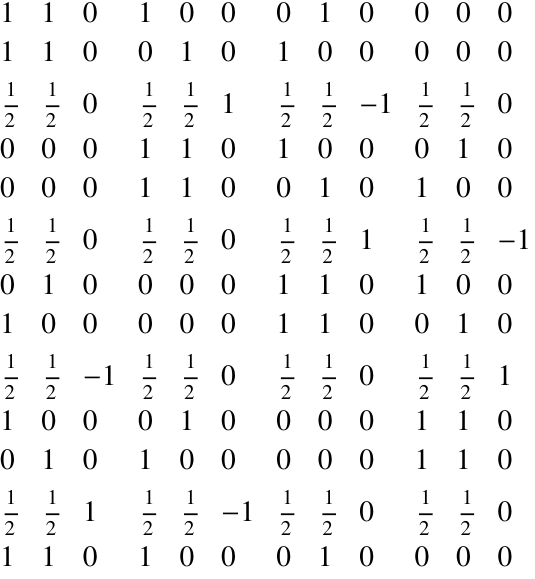}{(0 1 3 7) to (0 1 4 6) toÉ}{6 cm}{Zrelated}

This time it would be quite feasible to draw a continuous line between $\{0, 1, 3, 7\}$ and $\{0, 1, 4, 6\}$, though I think it is more interesting to iterate the transform and see how it bounces back to pc-sets (minor third transposes of the initial ones) part of the time. The same transformation applied to C major would yield its transposes by minor third -- 4 genuine pc-sets -- and 8 generalized distributions. This transformation effects a cubic root of the minor third transposition, but it is not a semitone transposition.
\medskip

We have been splitting movements between obviously related pc-sets in some non obvious ways. It is now time to draw on the whole continuous torus again, involving pc-sets with different shapes.

\subsection{Other triads}\label{subsection::augmented}
In \cite{CubeDance}, Douthett and Steinbach introduced augmented triads in an effort to rationalize movements between LPR-related triads. How does this fare with the torus model? There is one drawback with the phase measure, namely that the $\arg$ map is undefined in zero. And very regular chords, such as augmented triads, exhibit some nil Fourier coefficients, among which $a_5$. How can we still represent such a chord on the 3-5 torus? Perhaps by considering that the phase of 0 is not undefined, but is {\em any} real number (modulo $2\pi$). Graphically, this means that an augmented triad will be a circle ($a_3$ is fixed, $a_5$ varies arbitrarily) on the torus. Fig. \ref{augmented} is rather neat, with its four horizontal black circles.

\dessin{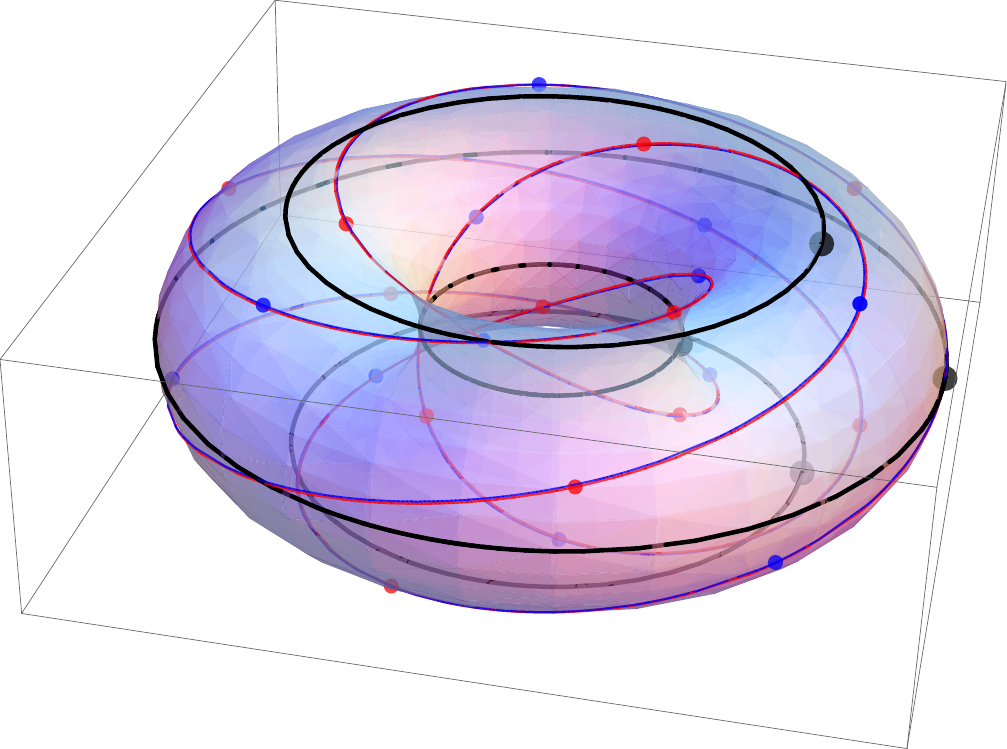}{triads, major, minor and augmented}{12 cm}{augmented}

I think that this picture shows very clearly the pivotal role of an augmented triad (diminished sevenths allow the same kind of modulation versatility), since one can move from one triad -- say D major -- to some close point chosen on a circle representing an augmented triad -- say D aug  -- and from there, circumnavigate the circle and exit close to the next triad -- say G minor, with no distance between one point of the circle and another, since it is really the same chord. The Cube Dance in \cite{CubeDance}, which considers all such moves on a discrete graph, takes place naturally in this picture (it was not drawn here for legibility).

\newpage

However, it could be surmised that {\em diminished} triads, which actually occur inside diatonic collections, would play a more natural role in a torus that is defined with the most important intervals in tonal contexts, e.g. major thirds and fifths (see again \cite{Krumhansl}). This sounds far-fetched, but is actually true. To begin with, diminished triads do not have nil Fourier coefficients with their embarrassing choice of graphic representation. Moreover, the green line connecting all diminished triads in the simplest way manages to wedge itself right between the red and blue lines studied above, so close to one another already (see Fig. \ref{diminished}).

\dessin{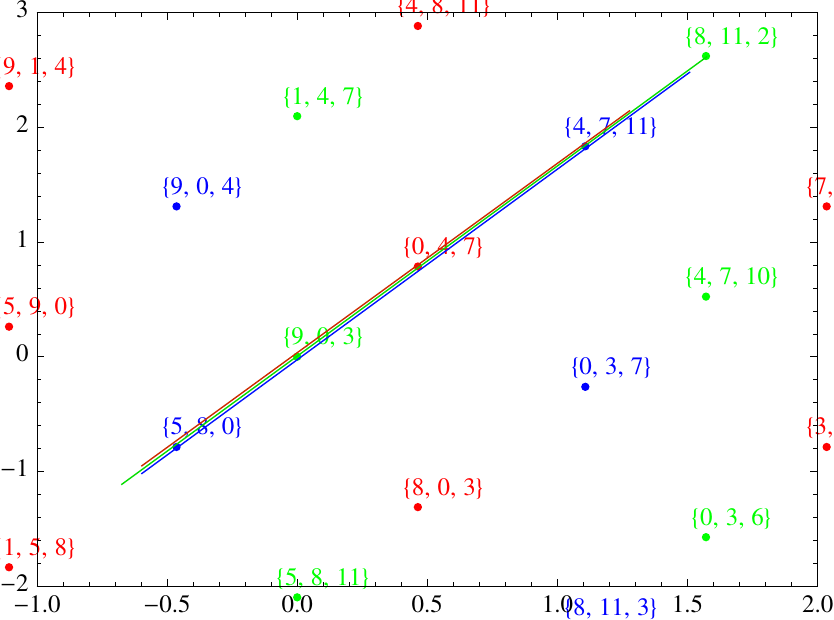}{Triads; major, minor and diminished}{10 cm}{diminished}

Parsimonious sequences of all seven triads included in C major, for instance, draw a sinuous but elegant ragged line in a small region of the torus (see my homepage for a movie, or the dotted line on Fig. \ref{allTriadsCmaj}).

\dessin{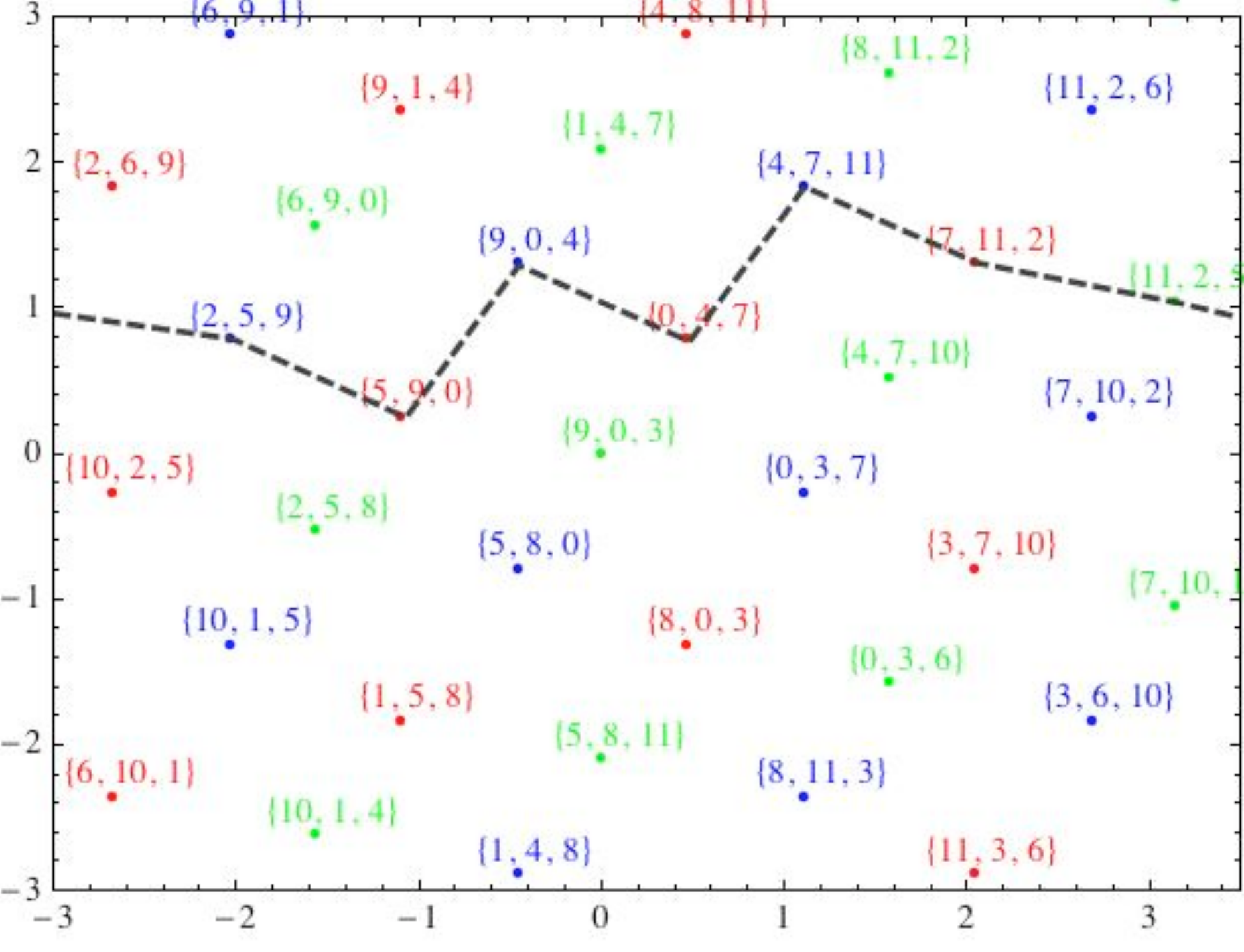}{All seven triads in C major}{10 cm}{allTriadsCmaj}

\subsection{3 or 4 chords}
A vexing question which received a lot of attention in recent research is the connection between chords with different cardinalities, whether using redoubled notes (like the singular points in orbifolds models) or trying for continuous gestures, at least at the generating stage (\cite{Plotkin}). I borrowed a very simple sequence of chords from this last reference (Fig. \ref{Plotkin})

\dessin{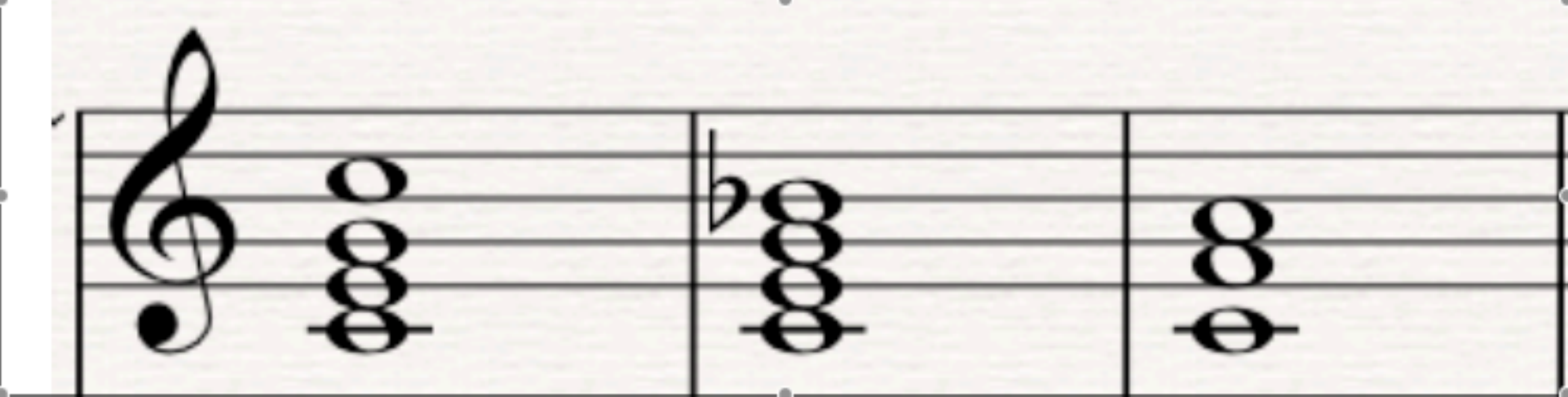}{An ordinary sequence of chords}{10 cm}{Plotkin}

 and computed their angular positions on the 3-5 torus (Fig. \ref{PlotkinTorus}, blown up for legibility). It can be seen that redoubling the tonic in the C major triad does not move the point on the torus away\footnote{ This might be another way to make sense of the elements of the red line between regular triads.} from the red line of major triads (the middle one), while the dominant seventh is clearly distinct from this paradigm, though not drastically so, which expresses rather well the kinship between these different chords. 

\dessin{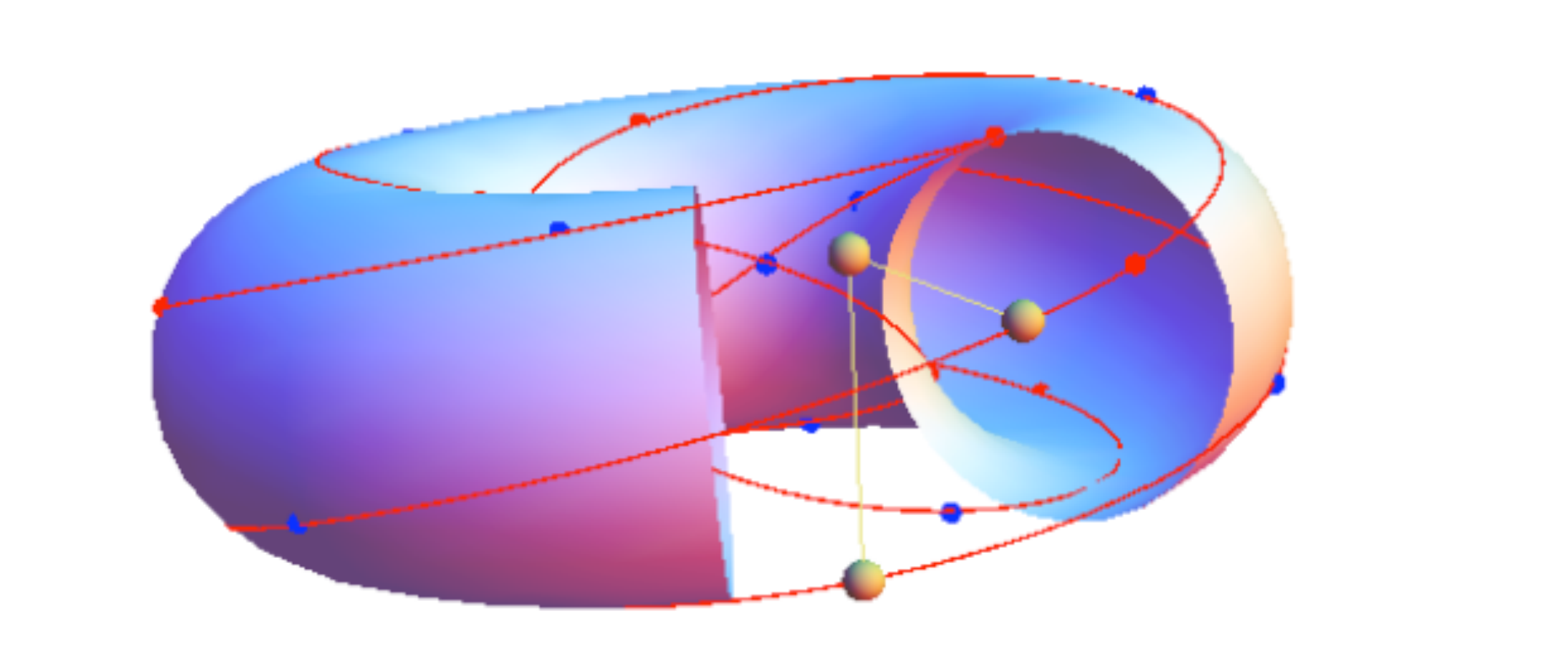}{Same on the torus}{10 cm}{PlotkinTorus}

We will end this sequence of examples with the beautiful and fairly complex beginning of Schumann's {\em Das bittendes Kind} in Kinderszenen opus 27.

\dessin{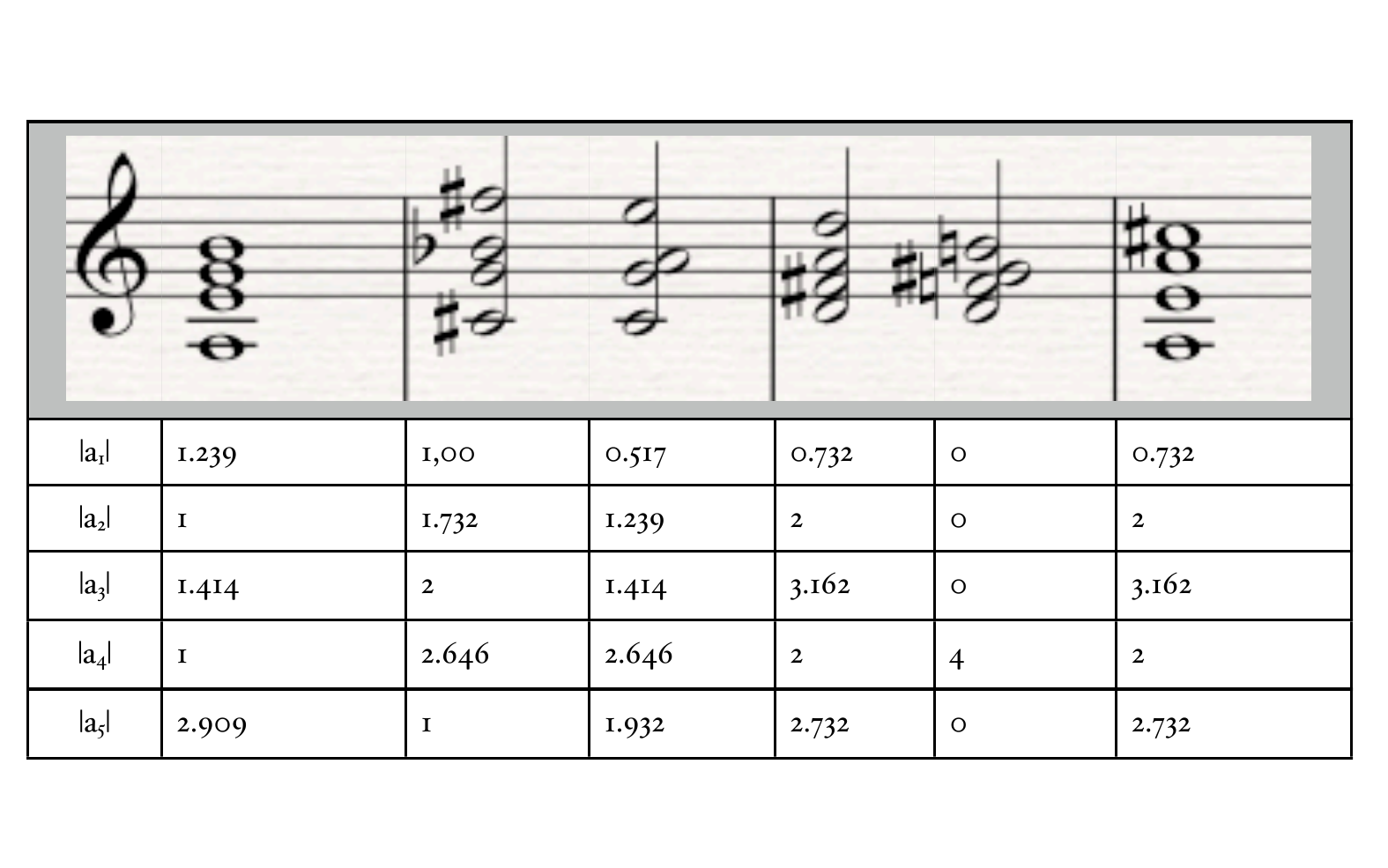}{Das bittendes Kind}{14 cm}{BittendesKind}

Here there is a momentous choice of which Fourier coefficients to consider\footnote{ The 3-5 torus is not a good choice anymore, since for instance the diminished seventh has $a_3 = a_5 = 0$ and could only be represented as suffusing the whole torus.}. The choice of $a_4$ seemed mandatory for 4-chords, for another one it was a close choice between the {\em fifthishness} ($a_5$) and the {\em major thirdishness} ($a_3$) as the most interesting coefficients. The picture shows their magnitudes, which helped select the 4-5 torus with coordinates $(\arg a_4, \arg a_5)$.
The ensuing picture is quite illuminating (Fig. \ref{schumann}).

\dessin{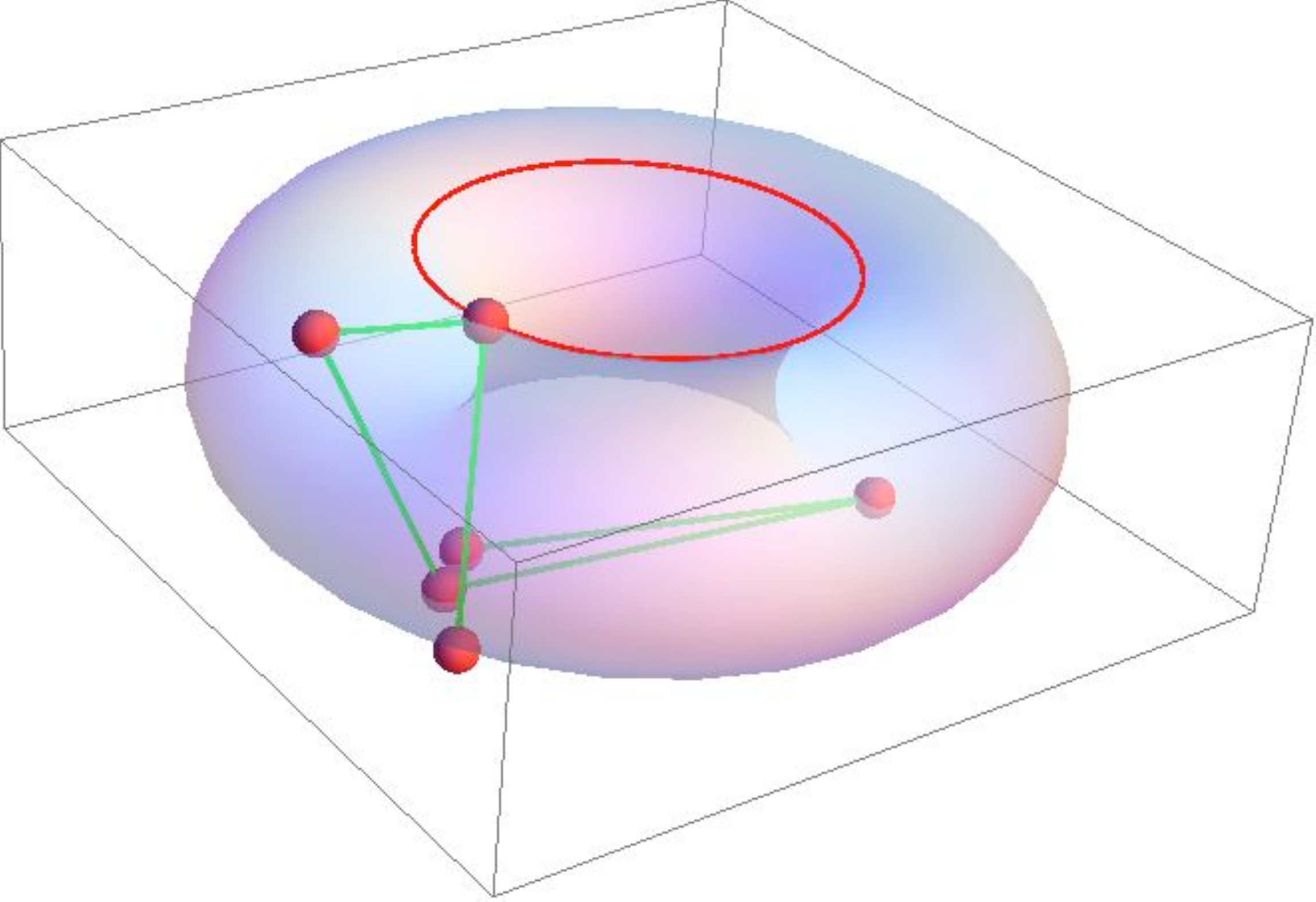}{Fourier coefficients in "das bittendes Kind"}{14 cm}{schumann}

Quite clearly, the weird second chord (far right on the picture) is a long way from either the starting one or the next (or indeed any other). Its two neighbours in time are much closer (a parsimonious move) to one another on this picture, though one (A-E-G-B, the most Þfth-ish) is a kind of incomplete major ninth, and the other an ordinary dominant seventh. Finally, the diminished seventh also departs from the region of ÔordinaryÕ triads with redoubled tonic, though not radically, especially if the argument of the nil $4^{th}$ coefficcient is well chosen (the diminished $7^{th}$ is again a circle, not a point, in this model, cf. the augmented triads in the discussion above) which, I think, reßects rather well the perception of this passage.

\newpage
\section*{Perspectives}
First and foremost, the model outlined above on a handful of musical examples should be tested on a 
larger scale, with numerous chord sequences, confronting it  with `` common musical sense''. To my 
mind, the graphical analysis / real-time representation of diverse musical pieces popularized by 
TymoczkoÕs movies of ChopinÕs E minor prelude or Deep Purple's {\em Smoke on the Water} (\cite{TymoMovies}), and hauled up to a level of graphic artistry in BaroinÕs work (\cite{Baroin}), offers powerful insights into the most intimate relationships between pc-sets. It would be interesting to compare what my phase torii have to offer in that respect.

On the theoretical line, I would like to explore the connections with BaroinÕs `` all chords model''\ (both graphically/geometrically and mathematically), since there are some striking coincidences with my 
results, though his starting points are {\em a priori} completely different (the torus of thirds). All the more reason to try and find relationships. Other interesting researchs involve continuous moves of Fourier coefficients and their repercution in physical space, a most striking example being Noll and Carl\'e's demonstration of `` Fourier scratching of rhythms''\ in the 2009 MCM convention in Yale (\cite{NollDJ}), extended to the realm of musical scales in \cite{ScaleLab}.

I have tried to make sense of fractional multi-pc-sets (generic points on the torus) inasmuch as they 
mimick transpositions of genuine pc-sets (though I must reinforce that microtonal transpositions of pc-
sets do not live within those spaces, whose original dimensions are the twelve pitch classes), or pc-sets with accented notes (truth values greater than 1) as we have seen when discussing Plotkin's example. More generally, it is vital to make better sense of any eventual relationship with the continuous (quotient) 
pitch spaces described by \cite{orbifolds, Callender}. 
One possible way would be to allow inÞnite dimension Hilbert spaces, wherein pitches can move 
continuously (see again \cite{Callender}) alongside the phase parameter: in the Fourier formulasÕ 
exponential coefficients $e^{-2i\pi k t/c}$, $k$ and $t$ would both be real numbers. Such large spaces, which can be projected on both QuinnÕs `` Fourier space''\ and all orbifolds, could be seen as Limits in category theory, a favorite generalization tool in MazzolaÕs framework. Some theorists are averse to those hegelian `` whole spaces of everything'', which might in this context be limited to subspaces / quotients / submanifolds (like the torii above) where a description by Fourier transform is relevant (spectral spaces, as \cite{Fuglede} coined it). 

There are many more possible questions and alleys to explore, but to the author the most pressingly 
puzzling one is: why are the green / blue / red lines so close on the 35 - torus? I think that any answer 
to this question will be essentially musical.

\section*{Thanks}

Gilles Baroin had been toying with Fourier coefficients without knowing it for some time, with his 
wonderful 4D graphic representations of pitch classes and chords. We had enlightening discussions 
about the convergence of our researches and the importance of geometric representations; which 
certainly were a strong motivation for this paper. 

To Pierre Beauguitte for taking up the beacon of Fourier coefficients of pc-sets in his master 
dissertation, with a quick grasp of their essential role and quite a few innovative ideas. It gave me the 
incentive to settle down to work on the topic again.

To Cliff Callender for his beautifully simple way of explaining rather abstruse concepts like Fourier transforms in quotient spaces.

To Jack Douthett who turned scales into algebraic formulas, and carries on in the right direction (that is to say Fourier transforms).

I am grateful to Aline Honingh for showing me from another standpoint (through Carol Krumhansl's 
book) the importance of $3^d$ and $5^{th}$ Fourier coefficients in the perception of tonality, and great work connecting Fourier coefficients and prevalence of interval classes. 

To Guerino Mazzola for his breathtaking formalization of gestures (and his seminal paper does mention 
Fourier space as a potentially interesting example!) and all our fascinating discussions. 

To Thomas Noll, who gently nudged me to look at the meaning of the phase of Fourier coefficients, not 
only their module, during one of our proliÞc strolls in Barcelona; and made some of the most interesting breakthroughs about the use of these coefficients in several unexpected musical domains. 

To Ian Quinn, who exhumated Fourier coefficients out of disuse and rejuvenated them, for our lasting 
delight.

To Bill Sethares for a thoroughly enjoyable trip into simplification of abstract topics.

To Dmitri Tymoczko for many interesting discussions (usually quite lively), innovative ideas, and 
intellectual honesty in investigating the problematic points in the orbifold models and voice-leading 
distances.

To the editor and anonymous reviewers of MTO who provided useful feedback on a previous, more ambitious version of this paper.

And all members of the mathemusical community for their innovative ideas and strong incentives to 
carry on exploring this fascinating research area.

\end{document}